\newtheorem{theorem}{Theorem}
\newtheorem{definition}{Definition}
\newtheorem{corollary}{Corollary}
\newtheorem{proposition}{Proposition}
\newcommand\Z{{\mathbb Z}}
\newcommand\F{{\mathbb F}}
\newcommand\R{{\mathbb R}}
\begin{document}
\title{3-Manifolds Modulo Surgery Triangles}
\author{Lucas Culler}
\maketitle

\section{Introduction}

Floer homology theories can be used to define a number of different invariants of closed, oriented $3$-manifolds.  Many of these theories satisfy a ``surgery triangle'', which is a relationship between the invariants of different Dehn surgeries $Y_{\alpha}(K)$ on a knot $K$ in a fixed manifold $Y = Y_{\infty}(K)$.   In general, if $\alpha$, $\beta$, $\gamma$ are three oriented surgery curves on the boundary torus of $Y \setminus \nu(K)$ such that
\[ \alpha \cdot \beta = \beta \cdot \gamma = \gamma \cdot \alpha = - 1 \]
in integral homology, then there is an exact triangle of Floer homology groups,
\[ \cdots \to HF(Y_{\alpha}(K)) \to HF(Y_{\beta}(K)) \to HF(Y_{\gamma}(K)) \to HF(Y_{\alpha}(K)) \to \cdots .\]
Surgery triangles are useful because one can always use them to express the invariants of a given manifold in terms of the invariants of simpler manifolds.  

To make this observation precise, it is useful to borrow some language from finite geometry.  We view diffeomorphism classes of oriented $3$-manifolds as points in a geometry $X$, whose lines are surgery triangles $(Y_{\alpha},Y_{\beta},Y_{\gamma})$.  Note that some lines in this geometry may contain ``doubled'' points, because a manifold can be involved in a surgery triangle with itself (for example, consider Dehn surgeries on the unknot).  We say that a set of $3$-manifolds $S$ is a subspace if each line meeting $S$ in two (not necessarily distinct) points is entirely contained in $S$.  We define the span of a set of $3$-manifolds to be the smallest subspace that contains it.  We say that a set of $3$-manifolds is a generating set if its span is all of $X$.  Using this terminology, we can precisely formulate the sense in which surgery triangles allow one to understand an arbitrary $3$-manifold in terms of simpler manifolds.
\begin{proposition} \label{gen} The $3$-sphere generates all closed $3$-manifolds. \end{proposition}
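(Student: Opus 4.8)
The plan is to combine the Lickorish--Wallace theorem with a ``percolation'' argument on the Farey graph of surgery slopes. By Lickorish--Wallace every closed oriented $Y$ is obtained from $S^3$ by integral surgery on a framed link $L = K_1 \cup \cdots \cup K_n$, so it suffices to prove that all such $Y$ lie in the span of $S^3$, which I would do by induction on $n$. The engine of the whole argument is a \emph{propagation lemma}: the surgery triangles on a fixed knot $K \subset Y'$ are exactly the Farey triangles $\{p/q,\, r/s,\, (p+r)/(q+s)\}$ with $ps - qr = \pm 1$ among the slopes in $\Q \cup \{\infty\}$, where $\infty$ denotes the trivial filling $Y'$ itself. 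Consequently, if the ambient manifold $Y' = Y'_\infty(K)$ and a single finite surgery $Y'_\beta(K)$ both lie in the span, then applying the two-points-force-the-third closure first along the triangles $(\infty,\, m,\, m{+}1)$ to sweep out all integer slopes, and then along mediant subdivisions to reach every rational slope, shows that $Y'_\gamma(K)$ lies in the span for \emph{every} slope $\gamma$.

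Before running the induction I would record two consequences of the doubled-point clause in the definition of a subspace. First, for a split unknot $U \subset Y'$ the slopes $\infty$ and $\pm 1$ all give back $Y'$, so the line $(\infty,\, 1,\, 0)$ meets the span in the doubled point $Y'$ and therefore forces $Y' \# (S^2 \times S^1) = Y'_0(U)$ into the span; feeding this anchor into the propagation lemma shows that the span is closed under connected sum with every lens space and with $S^2 \times S^1$. Taking $Y' = S^3$ in particular puts all lens spaces $L(p,q) = S^3_{p/q}(U)$ into the span, which settles the base of the induction and, more generally, every $Y$ presented by surgery on a split union of unknots.

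For the inductive step, write $K = K_n$ and let $Y' = S^3_{L \setminus K}$, which lies in the span by the inductive hypothesis; then $Y = Y'_{f_n}(K)$ for the integral slope $f_n$, which is Farey-adjacent to $\infty$. By the propagation lemma I am reduced to exhibiting \emph{one} finite surgery on $K$ that already lies in the span. This single anchoring step is where I expect the real difficulty to lie: every integer surgery on $K$ is again an $n$-component surgery and every rational surgery needs even more components, so no simpler corner is handed to us for free by slope manipulation alone.

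To produce the anchor I would introduce a secondary induction on the unlinking number of the presentation, exploiting the fact that a crossing change is itself a surgery-triangle move: a crossing of $L$ is altered by $\pm 1$ surgery on a small unknot $c$ bounding a disk that meets $L$ in two points, so the lines through $c$ relate $S^3_L$ to $S^3_{L^*}$, where $L^*$ has smaller unlinking number and is already understood, with the split union of unknots of the previous paragraph as base case. The technical heart --- and the step I would budget the most care for --- is keeping the auxiliary circle $c$ and its intermediate fillings inside the span and tracking the framing shifts produced by the crossing changes; I would absorb the latter using the closure under connected sum with lens spaces established above, and anchor the surgeries on $c$ itself by arranging, after the surgery on $L^*$, for $c$ to bound a disk and so furnish a further doubled point.
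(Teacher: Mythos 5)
The paper never actually writes out a proof of this proposition: it is treated as known, being the genus-zero case of the Baldwin--Bloom generation theorem cited as \cite{BaBl} (with Lickorish--Wallace as the classical input), so your proposal must be judged on its own merits. Most of it is sound, and sound in the way any correct proof must be: the dictionary between surgery triangles on a fixed knot and Farey triangles is right, the propagation lemma is right, and the doubled-point move on a split unknot --- forcing $Y' \# (S^2 \times S^1)$, and then all $Y' \# L(p,q)$, into the span --- is exactly the correct bootstrapping device, since a line meeting the span in a single simple point forces nothing. The reduction via Lickorish--Wallace to the problem of anchoring one finite surgery on $K_n \subset S^3_{L \setminus K_n}$ is also correct, and you correctly identified that step as the crux.

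The gap is that your anchor does not work. A crossing-change circle $c$ for $L$ gives a line whose three points are $S^3_{L}(f)$, $S^3_{L^*}(f^*)$, and the $0$-filling $S^3_{L\cup c}(f,0)$ --- a surgery on a link with \emph{more} components, covered by neither induction. Your secondary induction on unlinking number therefore places only the single point $S^3_{L^*}(f^*)$ on this line, and one point forces nothing; everything hinges on your claim that one can ``arrange, after the surgery on $L^*$, for $c$ to bound a disk'' and so obtain a doubled point. That claim is false in general: the only disk available is the crossing disk, which meets $L^*$ in two points, and those punctures survive the surgery. Concretely, take $L$ to be the right-handed trefoil with framing $+1$, unknotted by one crossing change along $c$ with $\mathrm{lk}(c,L)=0$, so that $L^* = U$ with $f^* = +1$ and $S^3_{L^*}(f^*) \cong S^3$; under this identification $c$ becomes a twist knot (the Whitehead-link partner), not an unknot, so it bounds no disk. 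Indeed it cannot: if $c$ bounded a disk in $S^3_{L^*}(f^*)$, every filling of the common complement would be a connected sum of $S^3_{L^*}(f^*)$ with a lens space, whereas the $\infty$-filling is $S^3_{+1}(\text{trefoil})$, the Poincar\'e sphere, which is irreducible and not a lens space. So the secondary induction never acquires a second collinear point and cannot begin. (Your side remark that framing shifts can be ``absorbed'' by connected sums with lens spaces is also wrong --- changing the framing on a knotted or linked component is not a connected sum --- though this is harmless if the secondary hypothesis quantifies over all framings.) The known arguments avoid this trap by choosing the surgery presentation so that the unknown vertex of each triangle is something the doubled-point trick \emph{does} supply: one presents $Y$ by surgery on curves lying on a standardly embedded Heegaard surface in $S^3$ with surface framings $\pm 1$ (equivalently, factors the Heegaard gluing into twists along curves bounding disks in one of the two standard handlebodies); then the $0$-framed vertex at each step is the previous manifold connect-summed with $S^2 \times S^1$, already in the span, so every line encountered genuinely carries two known points. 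Your crossing circles never have this property, and that is precisely where the proof breaks.
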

Similar concepts apply to $3$-manifolds with boundary.
\begin{definition} Let $\Sigma$ be a connected, oriented surface.  A bordered $3$-manifold with boundary $\Sigma$ is a pair $(Y,\phi)$, where $Y$ is a connected, oriented $3$-manifold and $\phi: \partial Y \to \Sigma$ is an orientation preserving diffeomorphism. \end{definition}
One can form a geometry whose points are isomorphism classes of bordered $3$-manifolds with boundary $\Sigma$, and whose lines are surgery triangles.  Denote this geometry by $X(\Sigma)$.  Then Proposition \ref{gen} has the following generalization, due to Baldwin and Bloom \cite{BaBl}. 

\begin{theorem} \cite{BaBl} $X(\Sigma_g)$ is finitely generated.  In fact, it has a generating set $S_g$ of cardinality
\[ N(g) = \sum_{k=0}^g {g \choose k} C_k  \]
where $C_k = \frac{1}{k+1} {2k \choose k}$ is the $k$-th Catalan number. \end{theorem}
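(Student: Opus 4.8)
The plan is to prove the theorem in two stages: first reduce an arbitrary bordered manifold to a handlebody with the aid of surgery triangles, relativizing Proposition \ref{gen}, and then classify handlebodies modulo the span relation, where the Catalan count will emerge from a non-crossing condition on systems of compressing disks. I would organize the argument by induction on the genus $g$. The base case $g=0$ is handled directly: capping the $S^2$ boundary with a ball identifies $X(\Sigma_0)$ with the geometry of closed manifolds and the ball $D^3$ with $S^3$, so Proposition \ref{gen} furnishes a single generator, matching $N(0) = \binom{0}{0} C_0 = 1$.

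\emph{Stage one: reduction to handlebodies.} Using a relative form of the Lickorish--Wallace theorem, I would present any $(Y,\phi)$ as integral Dehn surgery on a framed link $L$ lying in the interior of a genus-$g$ handlebody $H$ with $\partial H = \Sigma_g$ and boundary parametrization $\phi$. Because $L$ is disjoint from a collar of $\partial Y$, every surgery triangle used in the proof of Proposition \ref{gen} to trivialize a surgery diagram is supported in a ball near $L$ and hence induces an honest surgery triangle in $X(\Sigma_g)$ that fixes $\partial Y$ together with $\phi$. Running that argument verbatim places $(Y,\phi)$ in the span of the unsurgered handlebody $(H,\phi)$. Thus handlebodies, ranging over all boundary parametrizations, already generate $X(\Sigma_g)$, and it remains to thin this collection down to $N(g)$ representatives.

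\emph{Stage two: classification of handlebodies.} A handlebody $(H,\phi)$ is recorded by its cut system, the image under $\phi$ of the $g$ meridian disks, i.e. a system of $g$ disjoint simple closed curves on $\Sigma_g$ cutting it into a planar surface; handle slides and the handlebody group act on cut systems without changing $(H,\phi)$, and I would apply them freely to simplify. The remaining, non-free move changes the isotopy class of a single cut curve, and the crucial observation is that altering one cut curve through a Farey triple of slopes on the corresponding handle is exactly a surgery triangle, with the three curves realizing the pattern $\alpha \cdot \beta = \beta \cdot \gamma = \gamma \cdot \alpha = -1$ (this is precisely the genus-one case, where the three solid tori with Farey-neighbor meridians form a triangle). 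Combining free handle slides with triangle-realized curve changes, I would straighten any cut system into a non-crossing normal form relative to a fixed linear ordering of the $g$ handles. These normal forms are indexed by a subset $A \subseteq \{1,\dots,g\}$ recording which handles are reconnected away from their default meridian, together with a non-crossing partition of $A$ describing how those handles are grouped. Since there are $C_k$ non-crossing partitions of a $k$-element set, the number of such pairs is $\sum_{k=0}^g \binom{g}{k} C_k = N(g)$, and declaring the corresponding handlebodies to be $S_g$ completes the count.

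The main obstacle is stage two, and specifically the claim that surgery triangles suffice to push every cut system into a single non-crossing normal form. One must verify that the curve-changing move genuinely comes from admissible surgery slopes rather than a mere homological smoothing, that the reductions terminate --- through an induction on the total geometric intersection number of the cut system, paralleling the reduction of crossing number behind Proposition \ref{gen} --- and that the handlebody-group ambiguity does not force generators beyond the $N(g)$ planar normal forms. Establishing that each inductive move strictly decreases complexity while remaining inside the span is where the real content lies; once this is in place, the genus bookkeeping and the combinatorial identity $\sum_k \binom{g}{k} C_k = N(g)$ are routine.
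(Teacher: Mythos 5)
A preliminary remark: the paper does not prove this theorem at all --- it is quoted from Baldwin and Bloom \cite{BaBl}, and the paper's own contribution here (Section 2) is only the explicit description of $S_g$ as the set of almost-special handlebodies, i.e.\ $0$-surgeries on minimal crossingless links in $H_g$. So your attempt can only be judged on its own merits, and there it has a genuine error plus a deferred core.

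The error is in Stage one: the claim that running the proof of Proposition \ref{gen} ``verbatim'' inside $H$ ``places $(Y,\phi)$ in the span of the unsurgered handlebody $(H,\phi)$.'' This is false, and the way it fails is exactly where the number $N(g)$ comes from. The closed-manifold reduction has two kinds of steps: crossing/framing changes (triangles supported near the link, which do relativize), and the deletion of split unknotted components. The deletions are the steps requiring lines with doubled points: a split $(\pm 1)$-framed unknot can be blown down without changing the manifold, which anchors the induction on framings. Blowing down requires the component to bound a disk in the ambient manifold, and in $H_g$ a component circling a puncture bounds no disk (it represents a nontrivial conjugacy class in $\pi_1(H_g) \cong F_g$), so it can never be deleted; the reduction terminates not at the empty link but precisely at the minimal crossingless links, i.e.\ at $S_g$. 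Concretely, take $Y$ to be the $0$-surgery on a circle around the first puncture (the manifold denoted $(1)$ in the paper), presented as surgery on a one-component link in the standard $H_g$: your claim asserts $(1) \in \mathrm{span}(H_g)$. But the image of a span under the tautological map to $K(\Sigma_g)$ lies in the subgroup generated by the image of the spanning set, so the paper's homomorphism $\mu$ would force $[L_{(1)}] \in \Z \cdot [L_{H_g}]$ in $L(\Sigma_g)$; reducing mod $2$, distinct Lagrangians have distinct nonzero images in the universal embedding $L(\Sigma_g)\otimes\F_2$ of $DSp(2g,2)$, a contradiction since $L_{(1)} \neq L_{H_g}$. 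So no argument can place a surgered manifold in the span of the single unsurgered handlebody.

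Stage two, by contrast, is pointed in the right direction but is a plan rather than a proof. Its ingredients that are correct: any Dehn surgery on a core curve of a handle again yields a handlebody (since $H_g \setminus \nu(\mathrm{core}) \cong H_{g-1} \natural (T^2 \times I)$ and every filling of $T^2\times I$ is a solid torus), so Farey triples of slopes do give honest surgery triangles among handlebodies; and your normal forms (a subset $A$ of handles plus a non-crossing partition of $A$) are exactly the combinatorics of the paper's $S_g$, with the count $\sum_k \binom{g}{k} C_k = N(g)$. But the two claims carrying all the weight --- that handle slides together with such triangle moves push an arbitrary boundary parametrization into one of the $N(g)$ non-crossing normal forms, and that this reduction terminates --- are precisely what you defer as ``where the real content lies.'' Since Stage one is broken (though its conclusion, that handlebodies over all parametrizations generate, is salvageable once one accepts termination at minimal crossingless links) and Stage two's crux is left open, the proposal does not establish the theorem; what it settles correctly is the final count, which is the easy part.
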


In this paper we prove that some elements of $S_g$ can be removed.

\begin{theorem} \label{minimalgen} $X(\Sigma_g)$ has a generating set $M_g \subset S_g$ of cardinality
\[ n(g) = \frac{(2^g +1)(2^{g-1}+1)}{3}. \] 
\end{theorem}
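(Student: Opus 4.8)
The plan is to use the Baldwin--Bloom set $S_g$ as a starting point and to delete redundant elements one at a time, each deletion certified by a single surgery triangle. Since $\mathrm{span}(S_g) = X(\Sigma_g)$, it suffices to produce a subset $M_g \subseteq S_g$ with $|M_g| = n(g)$ for which $S_g \subseteq \mathrm{span}(M_g)$; the subspace closure property then forces $\mathrm{span}(M_g) = X(\Sigma_g)$. The only tool for placing a manifold in the span of others is the defining property of a subspace: if a line meets a subspace in two points, its third point lies there as well. So the whole argument reduces to exhibiting, for each element of $S_g \setminus M_g$, a surgery triangle through it whose other two vertices are already known to lie in $\mathrm{span}(M_g)$. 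These reducing moves are the bordered analogue of the surgery reductions underlying Proposition~\ref{gen}.

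First I would fix an explicit combinatorial description of $S_g$. Presenting $\Sigma_g$ as a disk with $g$ bands attached along its boundary, each Baldwin--Bloom generator is encoded by a state that activates a subset of the bands and records a crossingless matching of the resulting arcs; with $k$ active bands contributing $2k$ endpoints, this reproduces the count $N(g) = \sum_{k=0}^g \binom{g}{k} C_k$. Next I would catalogue the surgery triangles internal to this family. The elementary skein triangle, which interchanges the three standard fillings of the core circle of a single band, relates three states that agree away from one local region, and these local triangles are the edges along which redundancy will propagate.

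The core of the argument is a well-founded reduction. I would assign to each generator a complexity (for example, the number of active bands together with the nesting depth of its matching) and, proceeding from most complex to least, rewrite each removable generator using a local triangle whose other two vertices are strictly simpler, hence already in $\mathrm{span}(M_g)$ by induction. The surviving set $M_g$ consists of the states that admit no such simplifying move; I would identify these with the unordered pairs, allowing repetition, drawn from a set of $2^g+1$ reduced local configurations, with the cyclic symmetry of the exact triangle inducing a quotient by three. A direct count then gives
\[ |M_g| = \frac{1}{3}\binom{2^g+2}{2} = \frac{(2^g+1)(2^{g-1}+1)}{3} = n(g). \]

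The main obstacle is organizing the reduction so that it is genuinely acyclic and terminates at exactly $n(g)$. Two points demand care. First, the definition of subspace counts lines meeting the set in two \emph{not necessarily distinct} points, so the ``doubled'' triangles, in which one manifold appears twice, must be separated out: they can shortcut the induction but must not corrupt the tally of survivors. Second, I must verify that the chosen complexity strictly decreases along \emph{every} triangle used, so that no generator is eliminated in terms of one already removed; producing such a monotone local move for each non-surviving state, and then proving that the reduced configurations are precisely the states carrying no further reducing triangle, is where the substantive combinatorial work lies and is the step I expect to be hardest.
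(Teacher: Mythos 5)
Your high-level plan---start from the Baldwin--Bloom set $S_g$ and delete redundant elements, with each deletion certified by surgery triangles whose other vertices already lie in the span---is indeed the paper's coarse strategy, but both places where you locate the ``substantive work'' are precisely where the proposal breaks down, and neither can be repaired in the form you describe. The reduction cannot be organized as a well-founded induction in which every removable generator sits on a \emph{single} triangle whose other two vertices are strictly simpler: the redundancies in $S_g$ are not local. Indeed $N(g)=n(g)$ for all $g\leq 4$, and the first redundancies appear only in genus $5$ and genus $6$. In the paper they are certified by long chains of triangles (Propositions \ref{genus5prop} and \ref{genus6prop}) that leave $S_g$ entirely, passing through diagrams with nonzero surgery framings such as $(123)_1(13)_{-1}$, through non-minimal diagrams such as $(15)(14)(23)$, and through non-crossingless diagrams such as $(13)(24)$; the essential inputs are the lantern relation (Proposition \ref{genus3prop}) and a flip diffeomorphism of the genus-$4$ handlebody (Proposition \ref{genus4prop}). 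No complexity function that strictly decreases along each individual triangle can reproduce this structure, and ``elementary skein triangles'' internal to the band picture cannot account for redundancies that are invisible below genus $5$.

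The counting step is likewise asserted rather than proved. You identify the survivors with unordered pairs, with repetition, drawn from $2^g+1$ ``reduced local configurations,'' modulo a cyclic action of order $3$; but these configurations are never defined, no bijection with the survivors is exhibited, and the division by $3$ would require the order-$3$ action to be free, which is not addressed. That $\tfrac{1}{3}\binom{2^g+2}{2}$ equals $n(g)$ is correct arithmetic, but as it stands it is numerology fitted to the answer. The paper instead defines the surviving set $M_g$ explicitly (the \emph{special} handlebodies, via an inductive condition on the leftmost component of the crossingless link), counts the irreducible ones by the recursion $m(g)=m(g-1)+2m(g-2)$ with $m(g)=\tfrac{2^{g-1}+(-1)^g}{3}$, and obtains $n(g)$ by summing over the reducible decorations; generation is then proved by an induction organized around the three defining conditions of speciality, which reduces everything to the two explicit handlebodies $(145)(23)$ and $(14)(23)(56)$. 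Without an explicit candidate $M_g$, the genus-$5$ and genus-$6$ computations, and a genuine count of the survivors, your proposal does not yet contain a proof.
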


Definitions of $M_g$ and $S_g$ can be found in Section $2$, and a proof of Theorem \ref{minimalgen} can be found in Section $3$.  It turns out that the generating set $S_g$ has two special redundancies, one in genus $5$ and another in genus $6$.  These redundancies imply many more redundancies in higher genera, so the difference between $n(g)$ and $N(g)$ grows rapidly with $g$.  We include for convenience a table of the numbers $n(g)$ and $N(g)$:
\begin{center}
  \begin{tabular}{ | l | c | c | c | c | c | c | c | c |}
    \hline
    g & 0 & 1 & 2 & 3 & 4 & 5 & 6 & 7 \\ \hline
    N(g) & 1 & 2 & 5& 15 & 51 & 188 & 731 & 2950 \\ \hline
    n(g) & 1 & 2 &5 & 15 & 51 & 187 & 715 & 2795 \\
    \hline
  \end{tabular}
\end{center}

It turns out that there are no further redundancies - the generating set $M_g$ is minimal.  In Section $4$ we will prove an even stronger result:

\begin{theorem} \label{minimality} Any generating set for $X(\Sigma_g)$ has cardinality at least $n(g)$.  \end{theorem}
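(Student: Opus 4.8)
The plan is to \emph{linearize} the geometry. Fix the field $\F_2$ and call a map $v \colon X(\Sigma_g) \to V$ to an $\F_2$-vector space a \emph{surgery cocycle} if for every line $(Y_\alpha,Y_\beta,Y_\gamma)$ the three images satisfy
\[ v(Y_\alpha) + v(Y_\beta) + v(Y_\gamma) = 0 . \]
The purpose of this relation is that it converts the geometric span into the linear span. Indeed, if a line meets a set in two points whose images already lie in a subspace $U \subseteq V$, then the relation forces the image of the third point into $U$ as well; a doubled point $(p,p,q)$ merely yields $v(q)=0\in U$, so it is harmless. An easy induction on the span-construction then gives $v(\mathrm{span}(T)) \subseteq \mathrm{span}(v(T))$ for every $T \subseteq X(\Sigma_g)$. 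Consequently, if $S$ is any generating set, then $\mathrm{span}(v(X(\Sigma_g))) = \mathrm{span}(v(\mathrm{span}(S))) \subseteq \mathrm{span}(v(S))$, so $|S| \ge \dim \mathrm{span}(v(X(\Sigma_g)))$. Thus Theorem \ref{minimality} reduces to a single concrete task: produce a surgery cocycle together with $n(g)$ bordered manifolds whose images under it are linearly independent.

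To build the cocycle I would extract it from the surgery exact triangle itself. Taking total ranks modulo $2$ already gives, by exactness, $\dim_{\F_2}\widehat{HF}(Y_\gamma) \equiv \dim_{\F_2}\widehat{HF}(Y_\alpha) + \dim_{\F_2}\widehat{HF}(Y_\beta) \pmod 2$, so the mod-$2$ rank is a scalar cocycle; but it is far too coarse to detect $n(g)$ dimensions. Instead I would record the full mod-$2$ rank data \emph{refined by the boundary structure} that a Floer theory attaches to $\Sigma_g$: concretely, the class of the bordered invariant of $(Y,\phi)$ in the mod-$2$ reduction of the Grothendieck group of the bordered algebra $\mathcal{A}(\Sigma_g)$, assembled into a vector in a fixed $\F_2$-space $V = V(\Sigma_g)$. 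Exactness of the bordered surgery triangle makes this assignment a surgery cocycle in the sense above, and the mapping class group of $\Sigma_g$ acts on $V$ through its action on $H_1(\Sigma_g;\F_2)$, that is, through $Sp(2g,\F_2)$.

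The decisive point is the identification of $V(\Sigma_g)$ as (a copy of) the Weil representation of $Sp(2g,\F_2)$, which is irreducible of dimension exactly $\frac{(2^g+1)(2^{g-1}+1)}{3} = n(g)$. This is precisely where the arithmetic formula for $n(g)$ originates, and it is consistent with the upper bound: since $M_g$ generates, $v(M_g)$ spans $V$, forcing $\dim V \le |M_g| = n(g)$, while irreducibility shows any single manifold with nonzero invariant spans $V$ under regluing by mapping classes, so $v(X(\Sigma_g))$ spans $V$. Combining with the reduction of the first paragraph gives $|S| \ge \dim V = n(g)$ for every generating set $S$.

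I expect the main obstacle to be the lower estimate $\dim V \ge n(g)$, equivalently the \emph{linear independence} of the family $v(M_g)$ (or of an explicit collection of $n(g)$ bordered manifolds). Proving that the surgery-triangle relations collapse the free module on these manifolds no further than the Weil representation — that there is no hidden relation beyond the two genus-$5$ and genus-$6$ redundancies already responsible for the gap between $n(g)$ and $N(g)$ — is the genuine content; the closure lemma and the irreducibility argument for spanning are, by contrast, formal. I would attack the independence either by computing the rank of the $Sp(2g,\F_2)$-module directly from its character (using the known decomposition of the Weil representation), or by exhibiting an explicit dual family of surgery functionals that pairs nondegenerately with $v(M_g)$.
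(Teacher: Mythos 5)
Your first paragraph is sound, and it is in fact the paper's own reduction: the paper forms the universal target of your ``surgery cocycles,'' namely the group $K(\Sigma_g)$ spanned by bordered manifolds modulo $Y_\alpha+Y_\beta+Y_\gamma=0$, and observes that any generating set of $X(\Sigma_g)$ maps to a spanning set of it. But everything after that contains a genuine gap, and the step you yourself flag as ``the main obstacle'' is exactly the content of the theorem, so the proposal is not a proof. Two concrete problems. First, your cocycle is never constructed: you appeal to ``the class of the bordered invariant in the mod-$2$ Grothendieck group of $\mathcal{A}(\Sigma_g)$,'' but you neither define this space nor verify the triangle relation on it; in particular you never confront the degenerate surgery triangles (the ``tripled points''), where all three manifolds induce the same Lagrangian and where any candidate cocycle has to satisfy a nontrivial identity. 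Second, and decisively, the identification of the target with ``the Weil representation of $Sp(2g,\F_2)$, irreducible of dimension $n(g)$'' is false. Already for $g=2$ one has $n(2)=5$, while $Sp(4,2)\cong S_6$ has $2$-modular irreducibles only in dimensions $1,4,4,16$; there is no irreducible $\F_2$-representation of dimension $5$, and in general $n(g)$ is not a Weil-representation dimension. The module of dimension $n(g)$ is the universal embedding of the dual polar space $DSp(2g,2)$, which is generally not irreducible, and the fact that its dimension equals $n(g)$ is a nontrivial theorem of Li \cite{Li} and Blokhuis--Brouwer \cite{BlBr} --- precisely the lower-bound input your sketch assumes rather than proves.

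For comparison, the paper's cocycle requires no Floer theory at all: it sends $Y \mapsto (-2)^{n(Y)}[L(Y)]$, where $L(Y)\subset H_1(\Sigma_g;\F_2)$ is the kernel of $H_1(\Sigma_g)\to H_1(Y)$ (shown to be Lagrangian) and $n(Y)=\dim_{\F_2}H_2(Y)$, with target the group $L(\Sigma_g)$ spanned by Lagrangians modulo isotropic-triangle relations. Proposition \ref{triangleequalities}, a Mayer--Vietoris argument, verifies the two ways a surgery triangle can behave --- either the three Lagrangians form an isotropic triangle and the $n$'s agree, or the Lagrangians coincide and $(-2)^{n_\alpha}+(-2)^{n_\beta}+(-2)^{n_\gamma}=0$ --- which is exactly what makes the $(-2)^{n(Y)}$ weight necessary and makes the map descend to $K(\Sigma_g)\to L(\Sigma_g)$. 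The rank computation $\mathrm{rank}\, L(\Sigma_g)=n(g)$ is then imported from \cite{BlBr} (proved there by spectral graph theory), and the cardinality bound follows. To complete your argument you would have to either import that same computation (making the Floer-theoretic middleman superfluous) or reprove it; the representation-theoretic shortcut via irreducibility does not exist.
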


To explain the concept behind our proof, it is useful to think about a sort of classifying space.  Define a CW complex $B(\Sigma_g)$ by first taking an infinite wedge of circles, one for each bordered $3$-manifold with boundary $\Sigma_g$, and then attaching a triangle for every surgery triangle $Y_{\alpha},Y_{\beta},Y_{\gamma}$.  Note that there is a crucial ambiguity here in orienting the attaching maps.  In section $4$ we prove the following result, from which Theorem \ref{minimality} follows as a corollary:

\begin{proposition} \label{homologyrank} There exists a choice of attaching maps so that $H_1(B(\Sigma_g);\Z)$ is a free abelian group of rank n(g). \end{proposition}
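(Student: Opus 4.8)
The plan is to read $H_1(B(\Sigma_g);\Z)$ off the cellular chain complex $C_\bullet$ of $B(\Sigma_g)$ and then to choose the attaching-map orientations so that the answer is free of the stated rank. Since $B(\Sigma_g)$ has one $0$-cell, one $1$-cell for each bordered $3$-manifold $Y$, and one $2$-cell for each surgery triangle $T$, we have $\partial_1=0$, so $C_1=\bigoplus_Y \Z\langle Y\rangle$ and $H_1(B(\Sigma_g);\Z)=\operatorname{coker}\partial_2$. Each triangle's attaching map runs once around each of its three circles, so for $T=(Y_\alpha,Y_\beta,Y_\gamma)$
\[ \partial_2 T=\epsilon_\alpha\langle Y_\alpha\rangle+\epsilon_\beta\langle Y_\beta\rangle+\epsilon_\gamma\langle Y_\gamma\rangle,\qquad \epsilon_\bullet\in\{\pm1\}, \]
and the signs $\epsilon_\bullet$ are exactly the orientation ambiguity referred to in the statement. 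The problem is thus to choose the family $\{\epsilon_\bullet\}$ so that $\operatorname{coker}\partial_2\cong\Z^{n(g)}$.

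First I would record an upper bound valid for every sign choice. Because the third coefficient of each relation is a unit, whenever two vertices of a surgery triangle lie in a subgroup $H\subseteq H_1(B(\Sigma_g);\Z)$ the relation forces the third into $H$. Running the closure process that defines the span and invoking Theorem \ref{minimalgen}, an induction on the number of closure steps shows that every $\langle Y\rangle$ lies in the subgroup generated by $\{\langle Y\rangle:Y\in M_g\}$. Hence there is a surjection $\Z^{n(g)}\twoheadrightarrow H_1(B(\Sigma_g);\Z)$, the rank is at most $n(g)$, and---crucially for the corollary---the same argument applied to any generating set $G$ gives a surjection $\Z^{\lvert G\rvert}\twoheadrightarrow H_1(B(\Sigma_g);\Z)$; once the Proposition is known this yields $\lvert G\rvert\ge n(g)$, which is Theorem \ref{minimality}.

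It remains to prove freeness and the matching lower bound, i.e. that for a suitable global choice of signs the classes $\{\langle Y\rangle:Y\in M_g\}$ form a free basis of $\operatorname{coker}\partial_2$. I would set this up as a $\Z$-linear elimination scheme: order bordered manifolds by a well-founded complexity that strictly decreases under the reductions used to prove Theorem \ref{minimalgen}, and to each $Y\notin M_g$ assign the distinguished surgery triangle $T_Y$ expressing $Y$ through strictly simpler manifolds. Choosing signs so that $\langle Y\rangle$ occurs in $\partial_2 T_Y$ with coefficient $+1$ makes $\{T_Y\}$ a triangular system with unit pivots; Gaussian elimination over $\Z$ then removes every non-generator without creating torsion and identifies $\operatorname{coker}\partial_2$ with $\Z^{M_g}/R$, where $R$ is the group of relations among the generators that survives the elimination.

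The main obstacle is concentrated in this last step and is twofold. First, the sign choice must be globally coherent: the local signs demanded by the elimination scheme must be simultaneously realized by orientations of all attaching maps, and the ``doubled'' triangles $(Y,Y,Z)$ must be oriented so as not to impose a torsion relation such as $2\langle Z\rangle=0$. Second, one must show $R=0$, i.e. that reducing an arbitrary manifold to $M_g$ along surgery triangles yields a well-defined element of $\Z^{M_g}$, independent of the chain of triangles used. This is equivalent to checking that every cycle of reductions closes up consistently, which I expect to reduce to a finite list of fundamental syzygies controlled by precisely the two special redundancies in genera $5$ and $6$ noted after Theorem \ref{minimalgen}, together with the Baldwin--Bloom relations defining $S_g$. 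Verifying that these generate all relations and are compatible with the chosen signs gives $R=0$ and hence $\operatorname{coker}\partial_2\cong\Z^{n(g)}$.
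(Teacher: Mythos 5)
Your setup is fine: with one $0$-cell, a circle per manifold and a triangle per surgery relation, $H_1(B(\Sigma_g);\Z)=\mathrm{coker}\,\partial_2$, and your upper-bound argument is correct --- since every coefficient in $\partial_2 T$ is a unit, the closure process defining the span shows (for \emph{any} sign choice) that the classes of $M_g$ generate, so the rank is at most $n(g)$ by Theorem \ref{minimalgen}. But the actual content of the proposition is the other half: freeness and rank exactly $n(g)$. There your proposal is a program, not a proof. Everything is deferred to the claims that (i) the local signs demanded by your elimination scheme can be realized globally and (ii) the residual relation group $R$ vanishes, and the only justification offered is the expectation that all syzygies ``reduce to a finite list of fundamental syzygies controlled by the two special redundancies in genera $5$ and $6$.'' There is no basis for this: those redundancies concern the generating set $S_g$ versus $M_g$ (relations among generators), not relations-among-relations in an infinite presentation with one generator per bordered manifold and one relation per surgery triangle. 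More fundamentally, Gaussian elimination by itself can never produce the lower bound: to show $R=0$ you need an invariant that separates the classes of distinct elements of $M_g$ in the quotient and detects the absence of torsion, and your scheme contains no such invariant. The step ``$R=0$'' is exactly as hard as the proposition itself.

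The paper's proof supplies precisely this missing invariant, and in a way that never uses $M_g$ or Theorem \ref{minimalgen} at all (the introduction points this out). One orients all attaching maps so that every relation reads $Y_\alpha+Y_\beta+Y_\gamma=0$; then $H_1(B(\Sigma_g);\Z)$ is the group $K(\Sigma_g)$ of Section $4$. One then defines $\tilde\mu(Y)=(-2)^{n(Y)}[L(Y)]\in L(\Sigma_g)$, where $L(Y)\subset H_1(\Sigma_g;\F_2)$ is the Lagrangian kernel of $H_1(\Sigma_g)\to H_1(Y)$, $n(Y)=\dim_{\F_2}H_2(Y)$, and $L(\Sigma_g)$ is the analogous group generated by Lagrangians modulo isotropic triangles. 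Proposition \ref{triangleequalities} is exactly the statement that this weighted assignment annihilates every surgery-triangle relation; in particular the weight $(-2)^{n(Y)}$ is what handles the degenerate triangles in which all three Lagrangians coincide --- the ``doubled point'' phenomenon you correctly worried about but did not resolve. The induced map $\mu:K(\Sigma_g)\to L(\Sigma_g)$ is surjective (transitivity of $Sp_{2g}(\F_2)$ on Lagrangians) and injective (Proposition \ref{keylemma} plus the reduction of any bordered manifold with $H_2=0$ to a standard handlebody), hence an isomorphism, and freeness of rank $n(g)$ is then quoted from the Blokhuis--Brouwer computation of $L(\Sigma_g)$, which rests on spectral graph theory. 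So the two approaches differ in kind: yours tries to verify the presentation combinatorially from the inside, which founders on the syzygy problem, while the paper's identifies the group with an externally computed one.
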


Interestingly, our proof of Proposition $\ref{homologyrank}$ does not rely on the existence of $M_g$.  Instead, it exploits a relationship between $X(\Sigma_g)$ and another geometry: the binary symplectic dual polar space, or $DSp(2g,2)$.  The points of this geometry are Lagrangian subspaces of $\F_2^{2g}$ and the lines are triples of distinct Lagrangians whose intersection has dimension $g-1$.  In section $3$ we construct an explicit surjective map $\mu: X(\Sigma_g) \to DSp(2g,2)$ which takes surgery triangles to lines (or possibly tripled points).  Under this map, any generating set for $X(\Sigma_g)$ maps to a generating set for $DSp(2g,2)$.  

It is a result of Brouwer that the ``universal embedding dimension'' of $DSp(2g,2)$ is at least $n(g)$.  The proof uses spectral graph theory and is sketched in \cite{BlBr}.  Theorem \ref{minimality} follows from Brouwer's result, and the fact that the universal embedding dimension is a lower bound for the cardinality of any generating set for $DSp(2g,2)$.  

In fact, it was shown by Li \cite{Li}, and independently Blokhuis and Brouwer \cite{BlBr}, that the universal embedding dimension of $DSp(2g,2)$ is exactly equal to $n(g)$.  Our Theorem \ref{minimalgen} implies a stronger result, that $DSp(2g,2)$ has a spanning set of cardinality $n(g)$.  This confirms a conjecture of Blokhuis and Brouwer.

It should be noted that Li (and earlier McClurg \cite{McC}) proposed a spanning set for $DSp(2g,2)$.  However, their set is different from the image of $M_g$, so we have not verified that it generates.  

The number $n(g)$ was initially suggested to us by some lengthy computer calculations.  The author would like to thank Gabriel Gaster for his help in carrying out those calculations.  

\section{Special Handlebodies}

In this section we define a special set of bordered $3$-manifolds.  We will construct these manifolds by doing surgery on links in a thickened punctured disk.   

Explicitly, let $D$ be the unit disk in $\R^2$, and let $D_g^*$ be the punctured disk obtained by removing $g$ smaller disks $p_1, p_2, \dots,p_g$ from the interior of $D$.  Thickening $D_g^*$ yields a handlebody $H_g = D_g^* \times [-1,1]$, whose boundary is a genus $g$ surface $\Sigma_g$.   We choose a collection of disjoint arcs $\alpha_1,\dots,\alpha_g \subset D_g^*$, which connect the punctures to the boundary of the disk as shown: 
\begin{center}\includegraphics[ scale = 0.5]{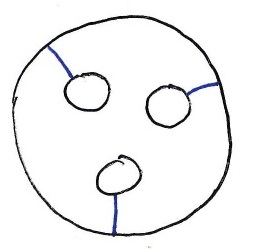}\end{center}
Let $A_i$ denote the disk $\alpha_i \times [-1,1]$, so that $A_1,\dots,A_g$ are a set of compressing disks for the handlebody $H_g$.  

\begin{definition} We say that a framed link $L \subset H_g$ is \emph{minimal} if it intersects each disk $A_i$ at most once geometrically, and each of its components intersects at least one disk.  We say that $L$ is \emph{crossingless} if it is contained in the slice $D_g^* \times \{0\}$.  \end{definition}

It can be shown that any manifold obtained by surgery on a minimal crossingless link is homeomorphic to a handlebody, and that if two minimal crossingless links are not isotopic then the corresponding handlebodies are not homeomorphic.  Strictly speaking we do not need either of these facts, but we will make use of them implicitly.

\begin{definition} A bordered manifold $H$ is said to be an \emph{almost-special handlebody} if it is obtained by doing $0$-surgery on each component of a minimal crossingless link $L \subset H_g$.    We denote the set of almost-special handlebodies by $S_g$. \end{definition}

The set of almost-special handlebodies was previously considered by Baldwin and Bloom \cite{BaBl}, although their description of these manifolds differs from ours.  We now describe a smaller set $M_g \subset S_g$.  The definition of $M_g$ requires several concepts, which we now explain.

Using the distinguished arcs $\alpha_i$, we can identify each puncture with a point on the boundary of $D$.  This identification induces a cyclic ordering on the set of punctures.  The definition of $M_g$ requires us to choose a total ordering that is compatible with this cyclic ordering.  Equivalently, we can choose a ``left-most'' puncture $p_1$, then list the remaining punctures $p_2, p_3, \dots, p_g$ as they appear in clockwise order around the boundary. 

Given a handlebody of genus $g$ and a puncture $p$, we can form a new handlebody of genus $g-1$ by filling $p$.  When we fill a puncture, the total ordering on the punctures of $D^*_g$ induces a total ordering on the punctures of $D^*_{g-1}$.  This allows us to induct on the genus of a handlebody.

\begin{definition} We say that an almost-special handlebody $H$ is \emph{reducible} if the corresponding crossingless link $L$ satisfies one of two conditions:
\begin{enumerate}
\item There is a puncture $p$ which is not circled by any component of $L$.
\item There is a component of $L$ which circles exactly one puncture.
\end{enumerate}
If $H$ is not reducible, we say that it is irreducible.  The \emph{reduction} of a reducible handlebody is the irreducible handlebody obtained by first deleting each component of $L$ that circles exactly one puncture, then filling every puncture that is not circled by any component.
\end{definition}

Given the ordering on punctures, we can order the components of any minimal crossingless link $L$, by declaring that a component $L_1$ is to the left of another component $L_2$ if the leftmost puncture circled by $L_1$ is to the left of the leftmost puncture circled by $L_2$.  It is therefore sensible to talk about the leftmost component of a minimal crossingless link. 

We now give an inductive definition of $M_g$.

\begin{definition} We say that an irreducible, almost-special handlebody is \emph{special} if the corresponding crossingless link $L$ satisfies each of the following three conditions:
\begin{enumerate} 
\item The handlebody obtained by deleting the left-most component of $L$ and filling every puncture it circles is special.
\item The punctures circled by the left-most component are consecutive with respect to the cyclic ordering (not necessarily with respect to the total ordering).
\item If the left-most component circles $p_1$, $p_g$, and $p_{g-1}$, then it circles every puncture.   
\end{enumerate}
In general, we say that an almost-special handlebody is special if its reduction is special.  We denote the set of special handlebodies by $M_g$. 
\end{definition} 

Note that the empty link in a genus zero handlebody satisfies each condition vacuously, so even though the definition is inductive it does not require a base case.  

\begin{proposition} For any $g \geq 1$, the number of irreducible special handlebodies is given by 
\[ m(g) = \frac{2^{g-1} +(-1)^g}{3}. \]
\end{proposition}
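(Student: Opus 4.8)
The plan is to translate the statement into a purely combinatorial count and then solve a linear recurrence. First I would unwind the definitions into a clean model. By minimality the whole link $L$ meets each disk $A_i$ at most once, so each puncture is circled by at most one component; irreducibility forces every puncture to be circled and forbids any component from circling exactly one puncture. Hence the circled-puncture sets of the components form a partition of $\{p_1,\dots,p_g\}$ into blocks of size at least two. Conditions (1) and (2) in the definition of special then force every block to be an arc that is consecutive in the cyclic order. Thus an irreducible special handlebody of genus $g$ is the same datum as a partition of the cyclically ordered set $p_1,\dots,p_g$ into cyclically consecutive arcs, each of length at least two, which in addition satisfies condition (3) recursively under the peeling operation of condition (1).

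Next I would set up a recurrence by peeling off the arc $B_1$ that contains the distinguished puncture $p_1$; this is exactly the left-most component. Filling $B_1$ leaves a genus $g-|B_1|$ handlebody which, by condition (1), is again special and which carries the induced cyclic order, so the number of completions equals $m(g-|B_1|)$ (with the empty genus-zero configuration counted once). It remains to count the admissible arcs $B_1$ of each length. Condition (3) says that a proper $B_1$ may not contain both $p_{g-1}$ and $p_g$. A cyclically consecutive arc containing $p_1$ of length $s$ is obtained by wrapping $j$ punctures down from the top, $0 \le j \le s-1$, and the constraint excludes all $j \ge 2$; the value $s=g-1$ is impossible because it strands a single puncture. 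Hence for each $s$ with $2 \le s \le g-2$ there are exactly two admissible arcs (namely $\{p_1,\dots,p_s\}$ and $\{p_g,p_1,\dots,p_{s-1}\}$), and for $s=g$ there is the single full arc. This yields
\[ m(g) = 1 + 2\sum_{k=2}^{g-2} m(k). \]

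Finally I would solve this. Subtracting the identity for $g-1$ from that for $g$ collapses the sum and gives the Jacobsthal recurrence
\[ m(g) = m(g-1) + 2\,m(g-2), \]
with initial values $m(2)=m(3)=1$ (and $m(1)=0$). The characteristic equation $x^2-x-2=0$ has roots $2$ and $-1$, so $m(g)=A\,2^{g}+B(-1)^{g}$; matching the initial values gives $A=\frac{1}{6}$ and $B=\frac{1}{3}$, that is $m(g)=\frac{2^{g-1}+(-1)^g}{3}$, as claimed.

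I expect the main obstacle to be the first step rather than the arithmetic: one must justify carefully that the three conditions defining special do force the consecutive-arc partition structure, that the left-most component is well defined, and---most delicately---that imposing condition (3) only on the peeled arc $B_1$ at each stage is genuinely equivalent to the global recursive definition of special. Once this combinatorial model is secured, the counting of admissible arcs and the solution of the recurrence are routine.
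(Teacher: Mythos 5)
Your recurrence, its telescoping to $m(g)=m(g-1)+2m(g-2)$, and the solution of that recurrence are all correct, and peeling off the left-most component is essentially the paper's own argument (the paper packages it as three insertion bijections: enlarging the left-most component accounts for the $m(g-1)$ term, and the two ways of creating a new size-two left-most component, $\{p_1,p_2\}$ or $\{p_g,p_1\}$, account for the $2m(g-2)$ term). However, the structural claim you single out as the crux---that conditions (1) and (2) force \emph{every} block to be a cyclically consecutive arc of the original cyclic order---is false, so the ``careful justification'' you call for cannot be supplied. Counterexample: in genus $6$, the handlebody $(12)(36)(45)$ is irreducible and special. Its left-most component $(12)$ is consecutive and satisfies condition (3), and after deleting it and filling punctures $1,2$ the remaining diagram relabels to $(14)(23)$ in genus $4$, which is special because $\{p_1,p_4\}$ \emph{is} consecutive in the induced cyclic order on the four remaining punctures. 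Yet the block $\{3,6\}$ is not a consecutive arc of the original $6$-cycle. Condition (2) constrains only the left-most component at each stage of the recursion, with respect to the cyclic order obtained after the earlier fillings; that is strictly weaker than your static model.

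The reason your final count is nevertheless right is that your recurrence never actually uses the false model: you correctly count the completions after peeling $B_1$ as $m(g-|B_1|)$, i.e.\ as arbitrary special configurations on the filled-down surface with its induced cyclic order. Had you counted completions as the static model dictates---partitions of the remaining punctures into arcs of the \emph{original} cycle---you would get $9$ rather than $11$ in genus $6$, since $(12)(36)(45)$ and $(16)(25)(34)$ would be omitted, and the claimed recurrence would fail. The repair is simply to drop the static reformulation: conditions (1)--(3) directly give a bijection between irreducible special handlebodies of genus $g$ and pairs consisting of an admissible left-most arc $B_1$ (consecutive, containing $p_1$, subject to condition (3)) together with an irreducible special handlebody on the remaining $g-|B_1|$ punctures. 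With that stated as the induction, your count of admissible arcs (two for each length $2\le s\le g-2$, one full arc, none of length $g-1$ since $m(1)=0$), the identity $m(g)=1+2\sum_{k=2}^{g-2}m(k)$, and the solution of the Jacobsthal recurrence go through verbatim.
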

\begin{proof} It suffices to show that $m(g)$ satisfies the recursion
\[ m(g) = m(g-1) + 2m(g-2), \]
together with the initial conditions $m(1) = 0$, $m(2) = 1$.  The initial conditions are clear from the definitions.  To prove the recursion, observe that any irreducible element of $M_g$ is obtained in a unique way by one of three constructions:
\begin{enumerate}
\item Starting with an irreducible element of $M_{g-1}$, insert a puncture inside the left-most component, just to the right of the left-most puncture it contains.
\item Starting with an irreducible element of $M_{g-2}$, insert two new left-most punctures, and create a new component circling both of them.
\item Starting with an irreducible element of $M_{g-2}$, insert a new left-most puncture and a new right-most puncture, and create a new component circling both of them.
\end{enumerate}
\end{proof}

Be warned that the special case $m(0) = 1$ is not consistent with the formula above.  

\begin{proposition} The number of special handlebodies is given by the formula
\[ n(g) = \frac{(2^g+1)(2^{g-1}+1)}{3} \]
\end{proposition}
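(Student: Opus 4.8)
The plan is to count the special handlebodies of genus $g$ by stratifying them according to their reductions. Every almost-special handlebody $H$ has a well-defined reduction $\rho(H)$, which is an irreducible special handlebody, and by definition $H$ is special precisely when $\rho(H)$ is. I will therefore group the genus-$g$ special handlebodies by the isomorphism type of $\rho(H)$ and count each group. The central claim is that if $H_0$ is a fixed irreducible special handlebody of genus $k$, then the number of genus-$g$ special handlebodies $H$ with $\rho(H) = H_0$ is exactly $\binom{g}{k} 2^{g-k}$.

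To prove this I set up an explicit bijection. The reduction deletes exactly the single-circling components of $H$ and fills exactly its uncircled punctures; call the punctures removed in this way the \emph{extra} punctures and those that survive the \emph{core} punctures. Since the link meets each arc $\alpha_i$ at most once, every puncture is circled by at most one component, so each core puncture is circled by a multi-circling component (namely a component of $H_0$) while each extra puncture is either uncircled or circled by its own single-circling component. Hence $H$ is determined by $H_0$ together with (i) the choice of which $k$ of the $g$ positions are occupied by the core punctures, and (ii) a label in $\{\text{uncircled}, \text{singly-circled}\}$ for each of the remaining $g-k$ extra punctures. Conversely, any such choice yields a valid minimal crossingless link whose reduction is $H_0$: no extra puncture is enclosed by a component of $H_0$, so those components can be routed to avoid the extras while remaining disjoint, non-nested, and meeting each arc at most once, and then filling the extras and deleting the single loops returns exactly $H_0$. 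Summing the count $\binom{g}{k} 2^{g-k}$ over the $m(k)$ irreducible special handlebodies of each genus $k$ (with the convention $m(0) = 1$) gives
\[ n(g) = \sum_{k=0}^{g} \binom{g}{k} 2^{g-k} m(k). \]

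To finish I evaluate this sum. It is convenient to set $M(k) = \tfrac{1}{3}\big(2^{k-1} + (-1)^k\big)$, which agrees with $m(k)$ for $k \ge 1$ but gives $M(0) = \tfrac12$ in place of $m(0) = 1$; correcting this single term yields
\[ n(g) = \sum_{k=0}^{g} \binom{g}{k} 2^{g-k} M(k) + 2^{g-1}. \]
The remaining sum splits into the two binomial sums $\sum_k \binom{g}{k} 2^{g-k} 2^{k-1} = 2^{2g-1}$ and $\sum_k \binom{g}{k} 2^{g-k} (-1)^k = (2-1)^g = 1$, so that
\[ n(g) = \frac{2^{2g-1}+1}{3} + 2^{g-1} = \frac{(2^g+1)(2^{g-1}+1)}{3}, \]
as claimed.

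The routine part is this final evaluation. The main obstacle is the geometric realizability step inside the bijection: one must check that, for every placement of the core punctures and every labeling of the extras, the components of $H_0$ together with the new single-circling loops genuinely occur as a minimal crossingless link in $D_g^*$ — that is, as disjoint, non-nested simple closed curves each crossing each arc $\alpha_i$ at most once and realizing the prescribed circling pattern. The point that makes this go through is that an extra puncture is never enclosed by a component of $H_0$, so each such component can be notched around the extras lying within its span without introducing crossings, nestings, or additional arc intersections.
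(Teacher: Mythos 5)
Your proof is correct and follows essentially the same route as the paper: the paper likewise decomposes a special handlebody into its uncircled punctures, its singly-circled punctures, and an irreducible special handlebody, obtaining the double sum $\sum_{k}\sum_{l}\binom{g}{k}\binom{g-k}{l}m(g-k-l)$, which is exactly your $\sum_k \binom{g}{k}2^{g-k}m(k)$ after collapsing the inner binomial sum. The only differences are presentational: you spell out the binomial-theorem evaluation (including the $m(0)$ correction term) and the geometric realizability of the bijection, both of which the paper asserts without detail.
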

\begin{proof} A special handlebody is determined uniquely by a set uncircled punctures, a set of components that circle exactly one puncture, and an irreducible special handlebody.  Therefore, the number of special handlebodies is equal to:
\[ \sum_{k=0}^g \sum_{l=0}^{g-k} {g \choose k} {g - k \choose l} m(g-k-l) \] 
where $m(h)$ is the number of irreducible handlebodies of genus $h$.  This iterated summation can be carried out in an elementary (but tedious) manner, by repeatedly applying the binomial theorem.  The result is $n(g)$.
\end{proof}

\section{Generation}

In this section, we prove that $M_g$ generates $X(\Sigma_g)$.  First we introduce some algebraic notation for the manifolds we are considering.  

Observe that $X(\Sigma_g)$ has a simple binary operation, given by stacking thickened punctured disks.   Given bordered manifolds $Y_1$ and $Y_2$ we denote the result of stacking $Y_1$ on top of $Y_2$ by $Y_1Y_2$.   We denote the result of $0$-surgery on a minimal crossingless knot by the symbol $(i_1 i_2 \dots i_k)$, where $p_{i_1},p_{i_2},\dots,p_{i_k}$ are the punctures circled by the knot, listed in increasing order.  

A special handlebody can therefore be represented by a symbol like $(247)(56)(8)$.   On the other hand, not every symbol like this corresponds to a special handlebody.  For example, $(123)(13)$ is crossingless but not minimal, and $(13)(24)$ is minimal but not crossingless.  Note that stacking is not commutative, so for example $(13)(24) \neq (24)(13)$. 

Finally, we will also need to change the surgery coefficients on some links in our diagram.  To represent $p$-framed surgery on a minimal crossingless knot, we will use a symbol like $(247)_p$.

\begin{proposition} To show that $M_g$ generates $X(\Sigma_g)$, it suffices to show that the following two handlebodies lie in the span of $M_g$: 
\begin{itemize}
\item Handlebody $A$ = the genus $5$ handlebody $(145)(23)$.
\item Handlebody $B$ = the genus $6$ handlebody $(14)(23)(56)$.
\end{itemize} 
\end{proposition}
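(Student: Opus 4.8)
The plan is to derive the Proposition from the theorem of Baldwin and Bloom \cite{BaBl}, which guarantees that $S_g$ already generates $X(\Sigma_g)$. Since taking spans is a closure operation---monotone and idempotent---it is enough to prove the apparently stronger statement that every almost-special handlebody lies in the span of $M_g$. Indeed, once $S_g$ is contained in the span of $M_g$, applying the span operation to both sides shows that the span of $M_g$ equals the span of $S_g$, which is all of $X(\Sigma_g)$. Thus the task becomes: assuming only that $A$ and $B$ lie in the span of $M_g$, show that every element of $S_g$ does as well. The assertion that these two handlebodies are all one needs will appear as the two base cases of an induction.

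First I would record the surgery-triangle moves available in the symbolic calculus. Each move is a surgery triangle realized by a single small unknot linking two parallel strands of the link in the slice $D_g^* \times \{0\}$: varying its surgery slope through the three values with pairwise intersection $-1$ produces three bordered manifolds, namely the two crossingless smoothings and one crossed term, which by definition form a line of $X(\Sigma_g)$. In symbolic terms this yields a \emph{splitting} move, relating a component circling $I_1 \cup I_2$ to the pair of components circling $I_1$ and $I_2$, and a \emph{coefficient} move, relating $0$-surgery, deletion, and $\pm 1$-surgery on a fixed component (the $(I)_p$ notation). The one fact I would use repeatedly is that subspace membership propagates along any line: if two of the three terms of a surgery triangle lie in the span of $M_g$, then so does the third.

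With these moves in hand I would induct on the genus $g$, with a secondary complexity measuring the failure of the three conditions defining $M_g$. The reduction built into the definition of specialness---deleting single-puncture components and filling uncircled punctures---is itself realized by the coefficient and splitting moves, so one may assume the handlebody is irreducible. A failure of condition (1) is handled by the genus $(g-1)$ case applied to the remainder after deleting the leftmost component; a failure of condition (2), where the leftmost component circles a cyclically non-consecutive set, or of condition (3), where it circles $p_1, p_{g-1}, p_g$ but not every puncture, is attacked by a splitting move across the offending gap. In each case the other terms of the triangle are strictly simpler---of lower genus, or with a smaller or more consecutive leftmost component---and the cyclic symmetry of the punctures lets one always arrange the consecutive situation. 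For $g \le 4$ every irreducible almost-special handlebody is already special, so no base case is needed; the induction produces its first irreducible obstructions exactly at genus $5$, in the condition-(3) handlebody $A = (145)(23)$, and at genus $6$, in the condition-(2) handlebody $B = (14)(23)(56)$.

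The main obstacle is the verification that this rewriting terminates with precisely these two base cases. Concretely, I expect the difficulty to be twofold: first, one must check that the surgery-triangle moves can always be realized in the configurations that actually arise, so that no crossed intermediate term is left that cannot be resolved back into the calculus; and second, one must carry out the bookkeeping showing that every non-consecutive or $\{p_1,p_{g-1},p_g\}$-type violation in genus greater than $6$ splits into strictly smaller violations of the same kind together with genuinely special handlebodies. It is the interaction between the total ordering and the underlying cyclic ordering on the punctures---which is exactly what conditions (2) and (3) regulate---that makes this reduction delicate, and managing that interaction, rather than the surgery theory, is where the real work lies.
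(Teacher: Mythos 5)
Your opening reduction (invoke Baldwin--Bloom so that it suffices to put every irreducible element of $S_g$ in the span of $M_g$) and your case analysis on which of the three conditions of specialness fails, with condition (1) handled by deleting the leftmost component and inducting on genus, both match the paper. The divergence is in how failures of conditions (2) and (3) are treated, and this is where there is a genuine gap. The paper's mechanism is to use the \emph{hypothesis itself in every genus}: an arbitrary irreducible violation of condition (3) (resp.\ (2)) is a ``grouped copy'' of the genus-$5$ model $A$ (resp.\ the genus-$6$ model $B$), obtained by replacing punctures of $A$ by consecutive clusters of punctures; since gluing a fixed bordered piece along a fixed subsurface carries surgery triangles to surgery triangles, the assumed expression of $A$ in the span of $M_5$ transplants to an expression of the given violation in terms of handlebodies that are special or strictly simpler, and the induction closes. (This grouping maneuver is exactly what the paper performs explicitly when it disposes of the byproduct $(23)(1456)$ at the end of the proof of Proposition~\ref{genus6prop}.) In your scheme, by contrast, $A$ and $B$ enter only as base cases at genus $5$ and $6$, and the inductive step in higher genus is carried entirely by elementary ``splitting moves.''

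That substitution breaks the proof in two ways. First, the splitting move you posit is not a move of this calculus: the surgery triangle supported on a small unknot $U$ about two parallel strands has as its three terms the original diagram, the diagram with $U$ retained as an extra surgery component, and (after blowing down) a diagram with a full twist inserted --- not ``the two crossingless smoothings.'' The splitting relation $(123)\sim(12)(23)+\mathrm{reducibles}$ is a \emph{derived} relation, Proposition~\ref{genus3prop}, whose proof needs the lantern relation (and its genus-$4$ analogue, Proposition~\ref{genus4prop}, needs a flip diffeomorphism plus Baldwin--Bloom in low genus); none of this is obtainable from the bare hypothesis that $A$ and $B$ lie in the span, so the crossed terms your moves create cannot be ``resolved back into the calculus.'' Second, even granting some splitting mechanism, the claim that $A$ and $B$ are the only irreducible obstructions is unjustified without the grouping idea: already in genus $6$ there are three further non-special irreducibles violating condition (3), namely $(156)(234)$, $(1456)(23)$, and $(1256)(34)$, and in genus $7$ one has, e.g., $(1567)(234)$. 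None of these is reachable from the genus-$5$ base case by moves confined to its own genus, whereas each is precisely a grouped copy of $A$ and is killed immediately by the transplant. The missing ingredient is therefore not bookkeeping, as your last paragraph suggests, but the grouping/transplant mechanism that lets a hypothesis about two fixed low-genus handlebodies act in every genus; without it, the conditional statement cannot be recovered along your route.
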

\begin{proof} By the result of Baldwin and Bloom, it suffices to show that every irreducible element of $S_g$ lies in the span of $M_g$.  Suppose that there is an irreducible $H$ in $S_g$ that is not generated by $M_g$.  Since $H$ is not special it violates one of the three conditions in the definition of special handlebody.  If it violates the third condition then we can simplify it using the reduction of handlebody $A$.  If it does not violate the third condition, but does violate the second condition, then we can simplify it using the reduction of handlebody $B$.  If it does not violate the second and third conditions, but does violate the first, then we can forget about the leftmost component and proceed by induction on $g$.  
\end{proof}

We are therefore reduced to doing two explicit computations.  There are two key results that make these computations possible.  

\begin{proposition} \label{genus3prop} The genus $3$ handlebody $(123)$ lies in the span of $(12)(23)$ and the reducible elements of $M_3$.  \end{proposition}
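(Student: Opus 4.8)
The plan is to produce a single surgery triangle (a single line of $X(\Sigma_3)$) whose three vertices are $(12)(23)$, $(123)$, and one reducible element of $M_3$; the proposition then follows immediately from the definition of span. Recall that varying the surgery slope on one framed knot through a Farey triple of slopes, i.e. three slopes with pairwise intersection $-1$, produces exactly a line in $X(\Sigma_g)$. On the level of link diagrams this is the familiar ``resolve a site'' operation: wherever two strands of the surgery link run close together, a small meridian $J$ of that site is an unknot whose boundary disk meets the link in two points, and the three slopes on $J$ realize the three local resolutions of those two strands while leaving the rest of the diagram fixed.

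First I would locate the relevant site in the diagram for $(12)(23)$. The right-hand arc of the component $(12)$ passes just to the right of $p_2$, and the left-hand arc of the component $(23)$ passes just to its left, so these two strands bound a region containing the single puncture $p_2$. Let $J$ be a small unknot encircling this pair of strands. I would then check that the three slopes on $J$ forming a Farey triple yield, respectively: the unresolved diagram $(12)(23)$ (from the slope that removes $J$); the smoothing that deletes the two inner arcs and joins the outer arcs into one circle enclosing $p_1,p_2,p_3$, namely $(123)$; and the opposite smoothing, which pinches off a small circle around the single puncture $p_2$ and leaves a circle around $p_1,p_3$, namely $(2)(13)$ (the stacking order of these two components is irrelevant for what follows).

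Next I would verify that the third vertex is a reducible element of $M_3$. The diagram $(2)(13)$ has a component, $(2)$, that circles exactly one puncture, so it is \emph{reducible}; its reduction deletes that component and fills the now-uncircled puncture $p_2$, producing the genus-$2$ handlebody $(12)$, which is the unique irreducible special handlebody of genus $2$ and hence special. Therefore $(2)(13)$ lies in $M_3$. Since both $(12)(23)$ and $(2)(13)$ belong to the set $\{(12)(23)\}\cup\{\text{reducible elements of }M_3\}$, and they are two of the three vertices of a single surgery triangle whose remaining vertex is $(123)$, the subspace axiom forces the entire line into the span. In particular $(123)$ lies in the span of $(12)(23)$ and the reducible elements of $M_3$, as desired.

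The main obstacle is the middle step: rigorously confirming, by Kirby calculus on the meridian $J$, that the three Farey slopes produce exactly these three handlebodies. Two points require care. First, I must track framings, since the symbol $(\cdots)$ denotes $0$-surgery: I need the Rolfsen twists induced by blowing down $J$ to leave every surgery coefficient equal to $0$, which constrains the linking number of $J$ with the strands and the local orientations at the site. Second, after each resolution I must isotope the resulting link back to a minimal crossingless representative before I can read off its symbol and confirm it is the claimed handlebody. Once these two diagrammatic verifications are carried out — a finite computation — the proposition follows.
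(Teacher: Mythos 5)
There is a genuine gap, and it is fatal: the single surgery triangle you propose, with vertices $(12)(23)$, $(123)$, and $(2)(13)$, does not exist --- indeed no surgery triangle whatsoever contains both $(12)(23)$ and $(123)$, so no choice of third vertex can save the argument. This is ruled out by the paper's own Proposition \ref{triangleequalities}. Let $x_1,y_1,x_2,y_2,x_3,y_3$ be the standard symplectic basis of $H_1(\Sigma_3;\F_2)$, with $x_i$ bounding the compressing disk $A_i$ and $y_i$ a circle around the puncture $p_i$. Computing the kernels of $H_1(\Sigma_3;\F_2)\to H_1(Y;\F_2)$ directly from the surgery descriptions gives
\[ L_{(12)(23)} = \langle\, x_1+x_2+x_3,\; y_1+y_2,\; y_2+y_3 \,\rangle, \qquad L_{(123)} = \langle\, x_1+x_2,\; x_2+x_3,\; y_1+y_2+y_3 \,\rangle, \]
and these two Lagrangians intersect trivially: an element $a(x_1+x_2)+b(x_2+x_3)+c(y_1+y_2+y_3)$ of $L_{(123)}$ has $x$-part $ax_1+(a+b)x_2+bx_3$, which is proportional to $x_1+x_2+x_3$ only when $a=b=0$, and then its $y$-part $c(y_1+y_2+y_3)$ (odd weight) lies in $\langle y_1+y_2,\,y_2+y_3\rangle$ (all even weight) only when $c=0$. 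But by Proposition \ref{triangleequalities}, the Lagrangians of the three manifolds in any surgery triangle are either all equal or pairwise intersect in dimension $g-1=2$. Since $L_{(12)(23)}\cap L_{(123)}=0$, the two handlebodies lie on no common line; in the dual polar space they are at maximal distance $3$, which is precisely why any derivation of $(123)$ from $(12)(23)$ must pass through a chain of triangles.

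The diagrammatic claim fails for a matching concrete reason: smoothings are additive on mod $2$ homology classes. Any single component obtained by merging a strand of the $0$-framed component $(12)$ with a strand of $(23)$ at one site is homologous to $[(12)]+[(23)]=(y_1+y_2)+(y_2+y_3)=y_1+y_3$, so it circles punctures $1$ and $3$; it can never be a curve circling all three punctures. (What the three fillings of your meridian $J$ actually yield, besides $(12)(23)$ itself, are a clasped, twisted diagram and --- after a handleslide of $(12)$ over $(23)$ and cancellation of $(23)$ against the $0$-framed $J$ --- surgery on a single curve homologous to $y_1+y_3$, roughly a framed $(13)$, not $(123)$.) Compare with the paper's proof, which is necessarily multi-step: it first trades $(123)$ for $(123)_1$ and the empty diagram by varying a framing, then introduces $(13)_{\pm1}$, simplifies one branch by a handleslide, and handles the other via the lantern relation $(123)_1(13)_{-1}=(12)_1(23)_1(1)_{-1}(2)_{-1}(3)_{-1}$, arriving at $(12)(23)$ modulo reducibles. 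Any repair of your argument must replace the single line by such a chain, since no single-line proof can exist.
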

\begin{proof} For any set of punctures $X$ there is a surgery triangle relating $X_{p}$,$X_{p+1}$, and the empty diagram.  Therefore, $(123)$ lies in the span of the empty diagram and $(123)_1$, which in turn lies in the span of $(123)_{1}(13)$ and $(123)_{1}(13)_{-1}$.  The former is equivalent by a handleslide to $(1)_1(13)$, which lies in the span of the reducible diagrams $(1)(13)$ and $(13)$.  To simplify $(123)_1(13)_{-1}$ we can apply the famous lantern relation, in the form
\[ (123)_{1}(13)_{-1} = (12)_{1}(23)_{1}(1)_{-1}(2)_{-1}(3)_{-1} \]
Modifying surgery coefficients on the right hand side of this equation shows that it lies in the span of $(12)(23)$ and reducibles, as desired.  
\end{proof}

We can easily modify the argument above to show that $(123)$ lies in the span of $(23)(12)$ and reducibles.  Thus Proposition \ref{genus3prop} remains valid no matter how we permute the punctures.  

\begin{proposition} \label{genus4prop} The genus $4$ handlebody $(12)(34)$ lies in the span of $(13)(24)$, $(14)(23)$, $(1234)$, and the reducible elements of $M_4$. \end{proposition}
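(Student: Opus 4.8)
The plan is to follow the template of Proposition \ref{genus3prop}: use framing surgery triangles to eliminate the $0$-framings one component at a time, and then invoke a single lantern relation -- this time the lantern on the four-holed sphere cut out by the four punctures -- to interchange the three ways of pairing them. The curve $(1234)$ will enter precisely because this four-holed sphere is obtained by capping the outer boundary of $D_4^*$.

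First I would strip the $0$-framings. For the knot $(34)$ there is a surgery triangle relating $(12)(34)$, its once-framed modification $(12)(34)_1$, and the diagram in which the knot has been erased, namely $(12)$. Since $(12)$ leaves punctures $3$ and $4$ uncircled it is reducible, so it suffices to treat $(12)(34)_1$. Applying the same triangle to the remaining $0$-framed knot $(12)$ and discarding the reducible diagram $(34)_1$, I reduce the problem to showing that the doubly framed two-component diagram $(12)_1(34)_1$ lies in the span of the listed generators.

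The heart of the argument is the lantern relation on the four-holed sphere $P$ obtained by capping the outer boundary of $D_4^*$. Its four boundary twists are the puncture-parallel curves $(1),(2),(3),(4)$, and its three interior curves are the three pair-separating curves, which in $D_4^*$ are represented by $(12)$, $(13)$, $(14)$ (equivalently, by their complements $(34)$, $(24)$, $(23)$). Two features govern the passage back to $D_4^*$: in $P$ the curve enclosing $\{1,2\}$ is isotopic to the one enclosing $\{3,4\}$, whereas in $D_4^*$ these are the distinct knots $(12)$ and $(34)$, differing by a twist about the capped boundary $(1234)$; consequently the two-component diagram $(12)_1(34)_1$ carries twists about \emph{both} representatives of a single class of $P$. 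Transcribing the lantern into surgery coefficients, exactly as in the genus $3$ computation, and using the complementary-pair identity to trade $T_{(12)}$ for $T_{(34)}$ at the cost of a power of $T_{(1234)}$, yields an identity of the schematic form
\[ (12)_1(34)_1 = (1234)_{?}\,(13)(24)_{?}\,(14)(23)_{?}\,(1)_{?}(2)_{?}(3)_{?}(4)_{?}, \]
whose right-hand side involves only the all-encircling knot $(1234)$, the two remaining pairings, and the puncture-circling knots $(1),\dots,(4)$, which leave punctures uncircled and so are reducible. Feeding this identity back through framing triangles converts the modified coefficients into $0$-framings at the expense of further reducible diagrams, leaving $(12)(34)$ in the span of $(13)(24)$, $(14)(23)$, $(1234)$, and reducibles.

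The main obstacle I expect is bookkeeping rather than conceptual. One must orient and choose representatives for the three pairing curves in the uncapped disk $D_4^*$ so that the lantern relation of $P$ lifts correctly, and keep track of the power of the outer-boundary twist $T_{(1234)}$ that is forced on us when we express a two-component pairing diagram through a single class of $P$. Pinning down the signs of the framings and handleslides so that every auxiliary diagram produced along the way is genuinely \emph{reducible} -- rather than merely crossingless but non-minimal -- is the delicate point; once the relation above is established with correct coefficients, the remainder is a direct transcription of the argument in Proposition \ref{genus3prop}.
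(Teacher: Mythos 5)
Your route here is genuinely different from the paper's, but as written it has two real gaps, not just bookkeeping. The central one is that the ``complementary-pair identity'' your argument hinges on is false: you cannot trade $T_{(12)}$ for $T_{(34)}$ at the cost of a power of $T_{(1234)}$. The kernel of the capping homomorphism $\mathrm{MCG}(D_4^*,\partial)\to\mathrm{MCG}(P,\partial)$ is not generated by the boundary twist $T_{(1234)}$; capping a boundary circle with a disk also kills all the classes that push the capped disk around loops in $P$, and $T_{(12)}T_{(34)}^{-1}$ is precisely such a disk-pushing class (for a loop separating $\{1,2\}$ from $\{3,4\}$). In particular it is not central, whereas every power of $T_{(1234)}$ is; concretely, $T_{(12)}$ drags the arc $\alpha_1$ around puncture $2$ and nothing else, while $T_{(34)}T_{(1234)}^{k}$ either fixes $\alpha_1$ or drags it around all four punctures, so no identity $T_{(12)}=T_{(34)}T_{(1234)}^{k}$ can hold. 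Consequently the schematic relation you write down, with the pairings grouped as $(13)(24)_{?}$ and $(14)(23)_{?}$, is not a relation in $\mathrm{MCG}(D_4^*,\partial)$. A true relation of roughly this flavor does exist, namely the pure braid group identity expressing the boundary twist as an ordered product of all six pair twists, $T_{(1234)}=T_{(12)}T_{(13)}T_{(14)}T_{(23)}T_{(24)}T_{(34)}$ for suitable curve representatives; rearranged, it relates $(12)_{\pm1}(34)_{\pm1}$ to a framed diagram on $(1234)$, $(13)$, $(14)$, $(23)$, $(24)$. But that relation forces an interleaved ordering of the four twists and a definite choice of the curves enclosing $\{1,3\}$, $\{2,4\}$, $\{1,4\}$, none of which your capping argument produces.

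The second gap is your last step: even granting a corrected identity, zeroing out the $\pm1$ framings by framing triangles does not leave only ``further reducible diagrams.'' The byproducts are the $0$-framed sub-diagrams of a four- or five-component stacked link, such as $(1234)(24)(23)(14)(13)$, $(24)(23)(14)$, and the reversed-order pair $(24)(13)$, which the paper is careful to distinguish from the generator $(13)(24)$ since stacking is not commutative. In these diagrams every puncture is circled and no component circles exactly one puncture, so neither reducibility condition applies (most are not even almost-special, since they meet compressing disks more than once), and they are not among your allowed generators; reducing them to the span of $(13)(24)$, $(14)(23)$, $(1234)$ and reducibles is essentially the content of the proposition itself. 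This is exactly where the analogy with Proposition \ref{genus3prop} breaks down: there the lantern lives on a four-holed sphere embedded in $D_3^*$ itself, so the byproducts really do have uncircled punctures or single-puncture components, whereas the four-holed sphere you need in genus $4$ is not embedded in $D_4^*$. The paper avoids all of this by an entirely different mechanism: it constructs an explicit diffeomorphism $\phi$ of $H_4$ (a flip exchanging punctures $2$ and $3$, corrected by two individual puncture flips) carrying the diagram $(12)(34)$ to $(13)(24)$, invokes the Baldwin--Bloom theorem to place $(13)(24)$ in the span of the almost-special handlebodies, and then checks case by case that $\phi^{-1}$ of each of those lies in the combined span of $(13)(24)$, $(14)(23)$, $(1234)$, and the reducible elements of $M_4$.
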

\begin{proof} By the result of Baldwin and Bloom, $(13)(24)$ lies in the span of the special handlebodies.  It is therefore enough to observe that there is a diffeomorphism $\phi$ of the standard genus $4$ handlebody which takes the diagram of $(12)(34)$ to the diagram of $(13)(24)$, and whose inverse sends the diagram of any special handlebody to a diagram which lies in the span of $(13)(24)$, $(14)(23)$, $(1234)$, and the reducible elements of $M_4$.

Such a diffeomorphism can be constructed as follows.  Arrange the punctures $1,2,3,4$ so that they lie on the vertices of a square, with $1$ in the upper right corner.  Draw a vertical arc that separates punctures $1$ and $4$ from punctures $2$ and $3$.  Flip over the part of $H_g$ which lies to the right of the arc, thereby switching punctures $2$ and $3$.  The result of applying this diffeomorphism to $(12)(34)$ is a two-component diagram whose components link punctures $1$ and $3$ and $2$ and $4$, respectively.  This diagram is not crossingless, however we can make it crossingless by flipping over punctures $2$ and $3$ individually, in a direction opposite to the original flip.  The combination of these three flips is a diffeomorphism $\phi: H_g \to H_g$ that takes $(12)(34)$ to $(13)(24)$.  

Finally, one applies the inverse diffeomorphism $\phi^{-1}$ to all special handlebodies of genus 4.  Using the result of Baldwin and Bloom for genera 2 and 3, one checks (tediously) that the resulting handlebodies all lie in the combined span of $(13)(24)$, $(14)(23)$, $(1234)$, and the reducible special handlebodies.
\end{proof}

We are now ready to eliminate handlebodies $A$ and $B$.

\begin{proposition}  \label{genus5prop} The handlebody $(145)(23)$ lies in the span of $M_5$. \end{proposition}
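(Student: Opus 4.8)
The plan is to treat the offending component $(145)$ as a triple over a cyclically consecutive block of punctures and eliminate it with the genus $3$ relation of Proposition \ref{genus3prop}. The whole point of this example is that its leftmost component $(145)$ circles exactly the three punctures $p_1,p_{g-1},p_g = p_1,p_4,p_5$, which is precisely what violates the third condition in the definition of a special handlebody. But these three punctures are consecutive in the cyclic ordering (they occur as $p_4,p_5,p_1$), so $(145)$ looks exactly like the knot $(123)$ of Proposition \ref{genus3prop} after the cyclic relabeling $4,5,1 \mapsto 1,2,3$, while the remaining component $(23)$ circles the complementary consecutive block $\{p_2,p_3\}$ and can be carried along untouched.

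First I would record the locality principle underlying all of these computations: a surgery triangle whose three links agree outside a sub-punctured-disk and differ only within it persists after stacking with any fixed diagram supported in the complementary punctures. Since $(23)$ is disjoint from the block $\{p_4,p_5,p_1\}$, every span relation proved for configurations on that block may be stacked with $(23)$ to give a valid relation in $X(\Sigma_5)$. Granting this, I apply the permuted form of Proposition \ref{genus3prop} (which, as noted after its proof, is valid for any labeling) to the block $\{p_4,p_5,p_1\}$ and obtain
\[ (145)(23) \in \operatorname{span}\big( (45)(15)(23),\ \text{reducible elements of } M_5 \big), \]
where $(45)(15)$ is the product of the two consecutive doubles sharing the middle puncture $p_5$, exactly as $(123)$ is replaced by the special handlebody $(12)(23)$, whose two components likewise share a middle puncture.

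It then remains to check that the one surviving irreducible term $(45)(15)(23)$ is itself special. It is irreducible, since all five punctures are circled and no component circles a single puncture, so conditions $(1)$--$(3)$ apply directly. Its leftmost component is $(15)$: this circles the cyclically consecutive pair $\{p_1,p_5\}$, so condition $(2)$ holds, and since it does not circle $p_{g-1}=p_4$ the hypothesis of condition $(3)$ fails and that condition holds vacuously. For condition $(1)$, deleting $(15)$ and filling $p_1,p_5$ leaves $(4)(23)$, whose reduction is the special handlebody $(23)$. Hence $(45)(15)(23)\in M_5$, and together with the reducible terms --- each of which, after filling its uncircled punctures, reduces to a special handlebody of smaller genus handled by Baldwin--Bloom and induction --- this places $(145)(23)$ in the span of $M_5$. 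Note that Proposition \ref{genus4prop} is not needed here: it repairs violations of the second condition, which is the situation for the genus $6$ handlebody $B$, whereas $A$ violates only the third.

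The main obstacle is not the algebra but the verification: one must justify the locality principle rigorously and confirm that \emph{every} term produced by Proposition \ref{genus3prop} genuinely lands in $M_5$. The reducible terms in particular carry nontrivial surgery coefficients inherited from the lantern relation, so I would need to track the framings and the (non-commutative) stacking order carefully, and check that the reductions of all auxiliary diagrams fall into genera where specialness is already established. The split $(145)\mapsto(45)(15)$ forces the two new doubles to share the puncture $p_5$, so the bookkeeping around that shared puncture, and the order in which $(45)$, $(15)$ and $(23)$ are stacked, is where the care is required.
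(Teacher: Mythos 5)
Your opening move is the same as the paper's: apply the permuted form of Proposition \ref{genus3prop} to the cyclically consecutive block $\{p_4,p_5,p_1\}$, carrying $(23)$ along by locality (the paper does exactly this, with a permutation giving $(15)(14)$ rather than your $(45)(15)$; both are legitimate). The fatal step is the claim that the surviving term $(45)(15)(23)$ lies in $M_5$. It does not, and in fact it is not even an almost-special handlebody: the components $(45)$ and $(15)$ both circle the puncture $p_5$, so the underlying link meets the compressing disk $A_5$ twice and is therefore not minimal. Membership in $M_5 \subset S_5$ requires a minimal crossingless link, and in a minimal link distinct components circle disjoint sets of punctures, so conditions (1)--(3) of the definition of specialness simply do not apply to $(45)(15)(23)$; your verification (e.g.\ ``deleting $(15)$ and filling $p_1,p_5$ leaves $(4)(23)$,'' which tacitly truncates the other component $(45)$ down to $(4)$) is a category error. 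The same misreading appears when you call $(12)(23)$ ``the special handlebody'': Proposition \ref{genus3prop} asserts only that $(123)$ lies in the span of the auxiliary, non-minimal diagram $(12)(23)$ together with reducibles; it does not assert $(12)(23) \in M_3$.

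Because of this, essentially all of the real work is missing. After the one application of Proposition \ref{genus3prop} you hold a non-minimal three-component diagram in which two components share a puncture, and the content of the paper's proof is the further chain of reductions --- six more displayed lines, using Proposition \ref{genus4prop} repeatedly together with additional applications of Proposition \ref{genus3prop} --- needed to rewrite such a diagram in terms of genuine elements of $M_5$. In particular, your parenthetical claim that Proposition \ref{genus4prop} is not needed is exactly backwards: it is needed precisely to untangle the shared-puncture configurations that Proposition \ref{genus3prop} produces. Your locality principle and your identification of which condition handlebody $A$ violates are both fine, but as written the argument establishes only that $(145)(23)$ lies in the span of $(45)(15)(23)$ and reducibles, which is where the paper's proof begins, not where it ends.
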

\begin{proof} We write $A \rightarrow X + Y + Z$ if the bordered manifold $A$ lies in the span of $X$,$Y$, $Z$, and manifolds already known to be in the span of $M_5$, like reducible handlebodies.  The reduction is very complicated to write down in full, so we only show the important steps.  The arrows below all follow from a combination of Propositions \ref{genus3prop} and \ref{genus4prop}:
\begin{eqnarray*}(145)(23) &\rightarrow& (15)(14)(23) \rightarrow (15)(12)(34) + (15)(13)(24) + (15)(1234) + (15)(234) \\
 (15)(13)(24) &\rightarrow& (13)(35)(24) \rightarrow (13)(23)(45) + (13)(25)(34) + (13)(2345) + (13)(245) \\
(13)(2345) &\rightarrow& (13)(23)(345) + (13)(23)(45) \\
(13)(23)(345) &\rightarrow& (12)(23)(345)  \rightarrow (12345) + (123)(45) + (12)(345) + (13)(245) \\
(12)(25)(34) &\rightarrow& (12)(35)(34) + (15)(23)(34) + (1235)(34) + (125)(34) \\
(1235)(34) &\rightarrow& (12345) + (125)(34) \end{eqnarray*} \end{proof}

\begin{proposition} \label{genus6prop} The handlebody $(14)(23)(56)$ lies in the span of $M_6$. \end{proposition}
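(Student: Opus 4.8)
The plan is to mirror the genus-$5$ reduction of Proposition \ref{genus5prop}. Writing $A \rightarrow X + Y + Z$ to mean that $A$ lies in the span of $X,Y,Z$ together with handlebodies already known to lie in the span of $M_6$ (reducibles and the special handlebodies produced en route), I will reduce $(14)(23)(56)$ through a finite chain of such arrows, each an instance of Proposition \ref{genus3prop}, Proposition \ref{genus4prop}, or a handleslide. The natural first move applies Proposition \ref{genus4prop} to the nested pair $(14)(23)$ on punctures $1,2,3,4$, carrying $(56)$ along as a spectator:
\[ (14)(23)(56) \rightarrow (12)(34)(56) + (13)(24)(56) + (1234)(56) + (234)(56). \]
Here $(12)(34)(56)$ and $(1234)(56)$ are special --- in each case the left-most component circles an initial block of consecutive punctures, and conditions (1)--(3) are immediate --- while $(234)(56)$ is reducible since puncture $1$ is uncircled. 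Thus only the crossing term $(13)(24)(56)$ remains to be dealt with.

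The crux is to reduce $(13)(24)(56)$, and this is harder than any single step in genus $5$. One cannot simply re-apply Proposition \ref{genus4prop} to the crossing pair $(13)(24)$: solving the genus-$4$ relation for $(13)(24)$ merely reintroduces $(14)(23)(56)$, and the two relations then combine to a relation among $(12)(34)(56)$, $(1234)(56)$ and reducibles without ever isolating the term we want. The underlying obstruction is that the three components $\{1,3\}$, $\{2,4\}$, $\{5,6\}$ are pairwise disjoint, so --- unlike the genus-$5$ crux term $(15)(13)(24)$, whose first two components share puncture $1$ --- no handleslide is available, and every application of Proposition \ref{genus4prop} to one pair leaves the third component disjoint from its output. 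Breaking this cycle forces one to engage the $(56)$ tail: apply Proposition \ref{genus4prop} to a pair that mixes $\{5,6\}$ with the block on $\{1,2,3,4\}$ so as to produce a four-puncture merged component, then split that component with Proposition \ref{genus3prop} to create a shared puncture, after which handleslides (replacing $(ab)(ac)$ by $(ac)(bc)$, e.g. $\{1,5\} \triangle \{1,3\} = \{3,5\}$) and further applications of Proposition \ref{genus4prop} can route around the $(14)(23) \leftrightarrow (13)(24)$ cycle. This is precisely where $(56)$ ceases to be a spectator, consistent with the introduction's remark that the second genuine redundancy of $S_g$ first appears in genus $6$.

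As in Proposition \ref{genus5prop}, the chain does not terminate by driving every term to a special or reducible handlebody; rather, the irreducible non-special handlebodies that appear --- crossing configurations, and configurations whose left-most component circles non-consecutive punctures (the analogues of the $(13)(245)$-type terms that occur twice and cancel in genus $5$) --- should each appear an even number of times across the full expansion and cancel in pairs, leaving only special and reducible handlebodies. I therefore expect the main difficulty to be purely organizational: choosing the order of reductions so that these cancellations are manifest, and checking that the surviving endpoints --- special handlebodies such as $(123456)$, $(12)(3456)$ and $(123)(456)$, together with their reducible companions --- are genuinely special by verifying conditions (1)--(3) and the induction on genus. No geometric input beyond Propositions \ref{genus3prop} and \ref{genus4prop} and handleslides is required; as in genus $5$ the computation is elementary but lengthy, so I would record only the important intermediate arrows.
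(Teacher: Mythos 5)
Your opening moves coincide with the paper's: your first arrow is exactly the paper's first step (the paper folds your fourth term $(234)(56)$ into the unwritten reducibles), and you correctly identify both the crux term $(13)(24)(56)$ and the fact that one escapes the $(14)(23) \leftrightarrow (13)(24)$ cycle by applying Proposition \ref{genus4prop} to a quadruple mixing $\{5,6\}$ with $\{1,2,3,4\}$. The paper's second arrow, $(13)(24)(56) \rightarrow (13)(25)(46) + (13)(26)(45) + (13)(2456)$, does precisely this, with $(13)$ as spectator; its remaining displayed arrows are further applications of Proposition \ref{genus4prop} (one with punctures grouped, one after a commutation of disjoint components), not the Proposition-\ref{genus3prop} splittings and handleslides you describe.

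The genuine gap is your termination mechanism. You propose that the irreducible non-special byproducts appear an even number of times and ``cancel in pairs.'' Cancellation is not a valid inference in $X(\Sigma_6)$: the span is closed only under the rule that the third point of a line is added once the other two already lie in the set, so an unresolved term appearing as a byproduct of two different arrows is not thereby eliminated --- each source you want to reduce still requires that term to be proven in the span. (Parity cancellation makes sense only in the abelianization $K(\Sigma_6)$ of Section 4, and spanning $K(\Sigma_6)$ is strictly weaker than generating $X(\Sigma_6)$, which is what Theorem \ref{minimalgen} asserts; your reading of the genus-$5$ computation, where $(13)(245)$ occurs twice, misconstrues repetition as cancellation in the same way.) Concretely, in the paper's reduction the irreducible, non-special handlebody $(23)(1456)$ --- whose left-most component circles $p_1$, $p_5$, $p_6$ but not every puncture, violating condition (3) --- survives, even though it appears as a byproduct of two different arrows, and it is exactly the kind of term your parity heuristic would discard. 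The paper disposes of it by a step your proposal explicitly rules out (``no geometric input beyond Propositions \ref{genus3prop} and \ref{genus4prop} and handleslides''): grouping punctures $4$ and $5$ into a single puncture and invoking Proposition \ref{genus5prop}, so that the genus-$5$ redundancy is an input to the genus-$6$ one. Without this step, or an inlined repetition of the genus-$5$ argument, your reduction cannot close up.
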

\begin{proof} Again, we show the important parts of the reduction:
\begin{eqnarray*} (14)(23)(56) &\rightarrow& (13)(24)(56) + (12)(34)(56) + (1234)(56) \\
 (13)(24)(56) &\rightarrow& (13)(25)(46) + (13)(26)(45) + (13)(2456) \\
 (13)(25)(46) &\rightarrow& (12)(35)(46) + (15)(23)(46) + (1235)(46) \\
(13)(26)(45) &\rightarrow& (12)(36)(45) + (16)(23)(45) + (1236)(45) \\
 (13)(2456)&\rightarrow& (12)(3456) + (1456)(23) + (123456) \\
 (12)(35)(46) &\rightarrow& (12)(34)(56) + (12)(36)(45) + (12)(3456) \\
 (15)(23)(46) &=& (23)(15)(46) \rightarrow (23)(16)(45) + (23)(14)(56) + (23)(1456) \\
 (1235)(46) &\rightarrow& (1234)(56) + (1236)(45) + (123456) 
\end{eqnarray*}
The only byproduct of the above reductions which does not lie in $M_6$ is $(23)(1456)$.  Grouping together punctures $4$ and $5$ and applying Proposition \ref{genus5prop} shows that this handlebody lies in the span of $M_6$.  \end{proof}

\section{Minimality}

In this section we establish a lower bound for the cardinality of any generating set of $X(\Sigma_g)$.  Instead of working directly with $X(\Sigma_g)$ we use its ``universal embedding'' (see \cite{BlBr} or \cite{Li}, for example), or more precisely an integral lift of this embedding. 

\begin{definition} Let $\Sigma$ be a connected oriented surface.  We denote by $K(\Sigma)$ the free abelian group spanned by bordered $3$-manifolds with boundary $\Sigma$, modulo the relations
\[ Y_{\alpha} + Y_{\beta} + Y_{\gamma} = 0 \]
for every surgery triangle $(Y_{\alpha},Y_{\beta},Y_{\gamma})$. \end{definition}

There is a tautological map $X(\Sigma) \to K(\Sigma)$, and under this map any generating set for $X(\Sigma)$ maps to a spanning set for $K(\Sigma)$.  The group $K(\Sigma)$ can be thought of as a sort of Grothendieck group of bordered $3$-manifolds.

A subspace $L \subset H_1(\Sigma;\F_2)$ is said to be Lagrangian if it is isotropic with respect to the intersection pairing and has dimension $g$.  If $S \subset H_1(\Sigma;\F_2)$ is an isotropic subspace of dimension $g-1$, then the quotient $S^{\perp}/S$ is a $2$-dimensional $\F_2$ vector space, so $S$ is contained in exactly $3$ Lagrangian subspaces.  The relationship between these $3$ Lagrangians is analogous to the relationship between the $3$ manifolds involved in a surgery triangle, so we call such triples ``isotropic triangles''.

\begin{definition} Let $\Sigma$ be a connected oriented surface.  We denote by $L(\Sigma)$ the free abelian group spanned by Lagrangian subspaces $L \subset H_1(\Sigma;\F_2)$, modulo the relation
\[ L_{\alpha} + L_{\beta} + L_{\gamma} = 0 \]
for every isotropic triangle $(L_{\alpha}, L_{\beta}, L_{\gamma})$.  When $\Sigma = S^2$ we consider there to be a single Lagrangian subspace, the zero space, so that $L(S^2) = \Z$. \end{definition}

From now on all homology groups will have coefficients in $\F_2$, so $H_1(\Sigma) = H_1(\Sigma;\F_2)$. 

\begin{proposition} If $Y$ is a $3$-manifold with (possibly disconnected) boundary $\Sigma$, then the kernel $L(Y)$ of the map $H_1(\Sigma) \to H_1(Y)$ is Lagrangian. \end{proposition}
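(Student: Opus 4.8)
The plan is to prove the classical ``half-lives-half-dies'' lemma with $\F_2$ coefficients, phrased so that the disconnected case costs nothing extra. Write $i \colon \Sigma = \partial Y \hookrightarrow Y$ for the inclusion and let $V = H_1(\Sigma)$, a vector space of dimension $2g$ carrying the nondegenerate intersection form $\langle\,\cdot\,,\cdot\,\rangle_\Sigma$; nondegeneracy is Poincaré duality on the closed (possibly disconnected) surface $\Sigma$, and the form is block diagonal over the components, hence still nondegenerate. By definition $L(Y) = \ker\bigl(i_* \colon V \to H_1(Y)\bigr)$, which I abbreviate $L$. To say $L$ is Lagrangian is to say $L = L^{\perp}$, the orthogonal complement taken with respect to $\langle\,\cdot\,,\cdot\,\rangle_\Sigma$: indeed $L = L^{\perp}$ forces $L$ isotropic and, by nondegeneracy, $\dim L = \tfrac12 \dim V = g$. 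So the whole proof reduces to the single equality $L = L^{\perp}$.

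The engine will be Lefschetz duality for the compact manifold $Y$. From the long exact sequence of the pair I first record that $L = \ker i_* = \im\bigl(\partial \colon H_2(Y,\partial Y) \to V\bigr)$ by exactness at $V$. Next I invoke the adjunction relating the surface intersection form to the nondegenerate Lefschetz pairing $\langle\,\cdot\,,\cdot\,\rangle_Y \colon H_2(Y,\partial Y) \times H_1(Y) \to \F_2$, namely
\[ \langle \partial A, c\rangle_\Sigma = \langle A, i_* c\rangle_Y \qquad \text{for } A \in H_2(Y,\partial Y),\ c \in V. \]
Geometrically this is transparent: representing $A$ by a properly embedded surface $F$ and $c$ by a curve $\gamma \subset \Sigma$, every intersection of $F$ with a push-in of $\gamma$ occurs near $\partial Y$ and is counted by $\partial F \cap \gamma$ on $\Sigma$. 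Formally it is naturality of the cap product together with $\partial [Y] = [\Sigma]$.

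Granting the adjunction, the conclusion is pure linear algebra. Using the two nondegenerate pairings to identify $V \cong V^{*}$ and $H_2(Y,\partial Y) \cong H_1(Y)^{*}$, the formula says exactly that $\partial$ becomes the transpose $(i_*)^{*}$ of $i_*$. The standard identity $\im(f^{*}) = \mathrm{Ann}(\ker f)$ then reads, after transporting the annihilator back through the intersection form, as $\im \partial = (\ker i_*)^{\perp}$, that is $L = L^{\perp}$. This finishes the argument, and since nothing above used connectivity of $\Sigma$, it applies verbatim to disconnected boundary.

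The main obstacle is the adjunction formula: it is the one genuinely topological input, and making it rigorous rather than merely geometrically plausible requires care with the Poincaré--Lefschetz duality isomorphisms and the compatibility of cup and cap products along the boundary. Once that compatibility is in hand, isotropy and half-dimensionality of $L$ fall out simultaneously from $L = L^{\perp}$, with no separate dimension count needed.
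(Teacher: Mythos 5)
Your proof is correct; it is the classical ``half lives, half dies'' argument, and it rests on the same essential duality input as the paper's proof, but packages it differently. The paper splits the statement into two inclusions. Isotropy ($L \subseteq L^{\perp}$) is proved geometrically: curves $\alpha, \beta \in L$ bound surfaces $A, B \subset Y$, and after making $A$ and $B$ transverse, the arc components of $A \cap B$ pair up the points of $\alpha \cap \beta$, so $\alpha \cdot \beta = 0$. The reverse inclusion ($L^{\perp} \subseteq L$, phrased there as injectivity of the pairing map $p : H_1(\Sigma)/L \to L^{*}$) is in substance identical to your nondegeneracy argument, run element by element: for $\gamma \notin L$, Poincar\'e--Lefschetz duality produces $A \in H_2(Y,\Sigma)$ with $j_{*}(\gamma)\cdot A \neq 0$, and then $\alpha = \partial A$ lies in $L$ and satisfies $\gamma \cdot \alpha = j_{*}(\gamma)\cdot A \neq 0$ --- that last equality being exactly your adjunction formula, which the paper invokes in one line. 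So the difference is organizational: you obtain isotropy algebraically as well (from $c' = \partial A$, the adjunction, and $i_{*}c = 0$), collapsing the whole proposition into the single equality $L = L^{\perp}$, but this requires the full exactness $\ker i_{*} = \im \partial$ of the pair sequence and the identification of $\partial$ with the transpose of $i_{*}$ under the two duality isomorphisms; the paper needs only the easy containment $\im \partial \subseteq \ker i_{*}$ plus an elementary picture, so its topological machinery is lighter even though its proof has two separate halves. Both arguments are complete; yours is more uniform and makes the half-dimensionality and isotropy fall out simultaneously, while the paper's stays closer to embedded surfaces and avoids any appeal to the compatibility of duality with the long exact sequence.
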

\begin{proof} Let $\alpha$ and $\beta$ be a curves on $\Sigma$, bounding surfaces $A$ and $B$ in $Y$.  If we perturb $A$ and $B$ to intersect transversely, then their intersection is a union of arcs, and these arcs together show that $\alpha \cap \beta$ has even cardinality.  This shows that $L(Y)$ is isotropic.   

Now consider the map 
\[ p: H_1(\Sigma)/L(Y) \to L(Y)^* \]
arising from the intersection pairing on $H_1(\Sigma)$.  To show that $L(Y)$ has dimension exactly $g$, and is therefore Lagrangian, it suffices to show that $p$ is injective.

Therefore, suppose that $\gamma \in H_1(\Sigma)$ is nonzero in $H_1(\Sigma)/L(Y)$.  Pushing forward by the inclusion $j:\Sigma \to Y$, we obtain a class $j_*(\gamma)$ in $H_1(Y)$.  This class is nonzero, because $\gamma$ does not lie in $L(Y) = \ker j_*$.   Therefore, by Poincar\'e duality, there is a class $A$ in $H_2(Y,\Sigma)$ such that $j_*(\gamma) \cdot A$ is nonzero.  Represent $A$ by a surface in $Y$ whose boundary lies on $\Sigma$, and let $\alpha = \partial A$.  Then $\alpha \in L(Y)$ and $\gamma \cdot \alpha = j_*(\gamma) \cdot A$ is nonzero, so $p(\gamma)$ is nonzero as desired.
\end{proof}

\begin{proposition} \label{triangleequalities} If $Y_{\alpha}$, $Y_{\beta}$, $Y_{\gamma}$ form a surgery triangle, then one of two possibilities holds:
\begin{enumerate}
\item The corresponding Lagrangians $L_{\alpha}$, $L_{\beta}$, $L_{\gamma}$ form an isotropic triangle and we have
\[ n_{\alpha} = n_{\beta} = n_{\gamma} \]
where $n_{\alpha}$ (for example) denotes the rank of $H_2(Y_{\alpha})$ as an $\F_2$ vector space.
\item The corresponding Lagrangians are equal and we have
\[ (-2)^{n_{\alpha}} + (-2)^{n_{\beta}} + (-2)^{n_{\gamma}} = 0 .\]
\end{enumerate}
\end{proposition}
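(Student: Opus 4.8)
The plan is to reduce everything to linear algebra over $\F_2$ on the fixed knot complement. Write $Y_\star = M \cup_{T^2} V_\star$, where $M = Y \setminus \nu(K)$ has $\partial M = \Sigma \sqcup T^2$ and $V_\star$ is the solid torus glued so that the slope $\star$ bounds a meridian disk. A Mayer--Vietoris computation (using that $M$, $V_\star$, and $T^2$ are connected) identifies $H_1(Y_\star) = H_1(M)/\langle i_*(\star)\rangle$, where $i_* : H_1(T^2) \to H_1(M)$ is induced by inclusion. First I would record two elementary facts. Reducing the slopes mod $2$, the three classes $\alpha,\beta,\gamma$ are precisely the three nonzero vectors of $H_1(T^2;\F_2) \cong \F_2^2$ and satisfy $\alpha+\beta+\gamma = 0$, since $\alpha\cdot\beta = \beta\cdot\gamma = \gamma\cdot\alpha = -1$ is odd. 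And $\dim H_2(Y_\star) = \dim H_1(Y_\star) - g$, which follows from $\chi(Y_\star) = \tfrac12\chi(\Sigma) = 1-g$ together with $b_0 = 1$ and $b_3 = 0$.

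Combining these gives $n_\star = c - \epsilon_\star$, where $c = \dim H_1(M) - g$ is independent of $\star$ and $\epsilon_\star \in \{0,1\}$ vanishes exactly when $i_*(\star) = 0$. Thus the ranks $n_\star$ are controlled entirely by which slopes lie in $\ker i_*$, and the governing invariant for the whole proposition will be $r = \dim \im i_*$. I would then bring in two structural inputs. By the previous proposition applied to $M$, the kernel $\Lambda$ of $H_1(\partial M) = H_1(\Sigma)\oplus H_1(T^2) \to H_1(M)$ is Lagrangian of dimension $g+1$; hence $\im k_* + \im i_*$ has dimension $g+1$ (where $k_*: H_1(\Sigma) \to H_1(M)$), and $\Lambda \cap (0 \oplus H_1(T^2))$ is isotropic in the nondegenerate symplectic plane $H_1(T^2)$, forcing $\dim \ker i_* \le 1$, i.e.\ $r \in \{1,2\}$. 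The second input is that every $L_\star$ is Lagrangian of dimension $g$; writing $L_M = \ker k_*$, one has $L_\star = k_*^{-1}\langle i_*(\star)\rangle$, so $\dim L_\star = \dim L_M + \delta_\star$ with $\delta_\star = 1$ exactly when $i_*(\star)$ is a nonzero element of $\im k_*$. Since all three $L_\star$ have dimension $g$, the correction $\delta_\star$ is forced to be constant in $\star$.

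The dichotomy then falls out by feeding $r \in \{1,2\}$ into the identity $\dim(\im k_* + \im i_*) = g+1$. When $r = 2$ the three $i_*(\star)$ are the nonzero vectors of $\im i_*$, so constancy of $\delta_\star$ forces either $\im i_* \cap \im k_* = 0$ or $\im i_* \subseteq \im k_*$; the former would give $\dim L_M = g+1 > g$ and is excluded, leaving $\im i_* \subseteq \im k_*$, all $i_*(\star) \neq 0$, hence $n_\alpha = n_\beta = n_\gamma = c-1$, with $L_M$ isotropic of dimension $g-1$. When $r = 1$ exactly one slope lies in $\ker i_*$, and constancy forces $\im i_* \cap \im k_* = 0$, so all $L_\star = L_M$ coincide and the ranks form the multiset $\{c, c-1, c-1\}$, for which $(-2)^c + 2(-2)^{c-1} = 0$.

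What remains, and where the only real care is needed, is the $r = 2$ case: showing the three Lagrangians are distinct and form an isotropic triangle. Here I would pick $v_\star$ with $k_*(v_\star) = i_*(\star)$, so $L_\star = L_M + \langle v_\star\rangle$, and use $\alpha+\beta+\gamma = 0$ to get $v_\alpha + v_\beta + v_\gamma \in L_M$. In the two-dimensional symplectic space $L_M^\perp/L_M$ the images $\bar v_\star$ are then three vectors summing to zero, each nonzero (as $\dim L_\star > \dim L_M$), so no two coincide; they are exactly the three nonzero vectors, and the $L_\star$ are the three Lagrangians containing $L_M$. I expect the main obstacle to be precisely this coordination of the constancy of $\delta_\star$, the isotropy of $\Lambda$, and the ``half-lives-half-dies'' count so that each branch rules out the wrong numerical possibility, with the identity $(-2)^c + 2(-2)^{c-1}=0$ as the payoff that explains the otherwise mysterious exponent $(-2)^{n}$.
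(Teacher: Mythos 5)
Your proof is correct, and it shares the paper's skeleton---both pass to the knot exterior $M = Y\setminus\nu(K)$, apply the preceding proposition to get the Lagrangian $\Lambda \subset H_1(\Sigma)\oplus H_1(T^2)$, and split into two cases that match up exactly (your $r=2$ is the paper's $\dim S = g-1$, your $r=1$ is $\dim S = g$, where $S = L_M$)---but the two working steps are carried out by genuinely different means. The paper settles both cases geometrically: in the isotropic-triangle case it produces curves $\alpha',\beta',\gamma'$ on $\Sigma$ homologous in $M$ to the slopes and computes $\alpha'\cdot\beta' = \alpha\cdot\beta = 1$ via the witnessing surfaces to see that the three Lagrangians are distinct, and it extracts the $H_2$ ranks from a Mayer--Vietoris sequence in degree $2$, which needs the geometric observation that a closed surface in $Y_\alpha$ cannot meet the surgery core nontrivially since $\alpha$ does not bound in $M$. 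You replace both mechanisms with $\F_2$ linear algebra: the rank bookkeeping comes from the half-lives-half-dies Euler characteristic identity $b_2(Y_\star) = b_1(Y_\star) - g$ combined with the degree-$1$ Mayer--Vietoris computation $H_1(Y_\star) = H_1(M)/\langle i_*(\star)\rangle$, so that $H_2$ never has to be touched directly; and distinctness of the Lagrangians comes from the mod-$2$ relation $\alpha+\beta+\gamma = 0$ together with the symplectic plane $L_M^{\perp}/L_M$, rather than from intersection numbers of surfaces. Your route is more uniform and self-contained (the dichotomy is literally $r\in\{1,2\}$ for $r = \dim\im i_*$, the constancy of $\delta_\star$ does the work that the paper's case hypotheses do, and the identity $(-2)^{c} + 2(-2)^{c-1}=0$ falls out of the multiset $\{c,c-1,c-1\}$); the paper's route is more geometric and identifies concretely which of the three manifolds carries the extra $H_2$ class. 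Both arguments lean on the previous proposition twice---once for $M$ with disconnected boundary and once for each $Y_\star$---and you cite both uses, so there is no gap.
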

\begin{proof} By assumption, the manifolds $Y_{\alpha}$, $Y_{\beta}$, $Y_{\gamma}$ are obtained by doing surgery on a knot $K$ in some bordered manifold $Y$ with boundary $\Sigma_g$.  Denote by $Z$ the manifold $Y \setminus \nu(K)$, whose boundary is $\Sigma_g \sqcup T^2$.  Let $\alpha$, $\beta$, and $\gamma$ be the surgery curves on $T^2$ whose fillings give rise to $Y_{\alpha}$, $Y_{\beta}$, and $Y_{\gamma}$.  

Let $\tilde{L}$ be the kernel of the inclusion map $H_1(T^2) \oplus H_1(\Sigma_g) \to H_1(Z)$.  Then $\tilde{L}$ is Lagrangian, so it has dimension $g+1$.  The kernel of the projection of $\tilde{L}$ onto $H_1(T^2)$ is an isotropic subspace $S \subset H_1(\Sigma_g)$, which is contained in all three Lagrangians $L_{\alpha}$,$L_{\beta}$,$L_{\gamma}$.  Since $S$ is obtained by intersecting $\tilde{L}$ with a codimension $2$ subspace, it has dimension at least $g-1$.  Since it is isotropic it has dimension at most $g$.  We therefore consider two separate cases.

If $S$ has dimension $g-1$, then the projection $\tilde{L} \to H_1(T^2)$ is surjective.  Hence there are classes $\alpha'$, $\beta'$, and $\gamma'$ in $H_1(\Sigma_g)$ that are homologous in $Z$ to $\alpha$, $\beta$, and $\gamma$.  Let $A$, $B$, and $C$ be surfaces witnessing these homologies.  Then we have (for example)
\[ \alpha' \cdot \beta' = A \cdot \beta' = A \cdot \beta = \alpha \cdot \beta = 1 ,\]
so the classes $\alpha'$, $\beta'$, and $\gamma'$ are all distinct.  Evidently these classes lie outside of $S$, and they bound in $Y_{\alpha}$, $Y_{\beta}$, and $Y_{\gamma}$ respectively, so together with $S$ they span the Lagrangians $L_{\alpha}$, $L_{\beta}$, and $L_{\gamma}$.  Since these Lagrangians are all distinct, they form an isotropic triangle.  Finally, note that a closed surface in $Y_{\alpha}$ (for example) cannot intersect the surgery curve in a homologically nontrivial way, because $\alpha$ does not bound in $Z$.  Therefore, the Mayer-Vietoris sequence
\[ H_2(T^2) \to H_2(Z) \oplus H_2( S^1 )  \to H_2(Y_{\alpha}) \to H_1(T^2) \]
shows that the map $H_2(Z)/H_2(T^2) \to H_2(Y_{\alpha})$ is an isomorphism, and similarly for $\beta$ and $\gamma$, so $n_{\alpha} = n_{\beta} = n_{\gamma}$.

If the dimension of $S$ is $g$, then the image of the projection $\tilde{L} \to H_1(T^2)$ is $1$-dimensional and $S = L_{\alpha} = L_{\beta} = L_{\gamma}$.   Without loss of generality, the image of the projection is spanned by $\alpha$.   Without loss of generality, $\alpha$ is in the image of the projection, so there is some $\alpha'$ in $H_1(\Sigma)$ such that $(\alpha, \alpha') \in \tilde{L}$.  By definition this means that $\alpha$ and $\alpha'$ are homologous in $Z$, so $\alpha'$ bounds in $Y_{\alpha}$, and therefore $\alpha' \in S = L_{\alpha}$.  We conclude that $\tilde{L} = \mathrm{span}(\alpha) \oplus S$.  

At any rate, we see that the kernel of $H_1(T^2) \to H_1(Z)$ is spanned by $\alpha$, hence the Mayer-Vietoris sequence
\[ H_2(Z) \oplus H_2(S^1) \to H_2(Y_{\alpha/\beta/\gamma}) \to H_1(T^2) \to H_1(Z) \oplus H_1(S^1) \]
shows that the rank of $H_2(Y_{\alpha})$ is one greater than the common rank of $H_2(Y_{\beta})$ and $H_2(Y_{\gamma})$.  The identity
\[ (-2)^{n_{\alpha}} + (-2)^{n_{\beta}} + (-2)^{n_{\gamma}} = 0 \]
then follows after a moment of thought about powers of $2$.
\end{proof}

Motivated by Proposition \ref{triangleequalities}, we define a map $\tilde{\mu}: X(\Sigma_g) \to L(\Sigma_g)$ by
\[ \tilde{\mu}(Y) = (-2)^{n(Y)} [L(Y)] \]
where $n(Y)$ is the rank of $H_2(Y)$ as a vector space over $\F_2$. 

\begin{proposition} The map $\tilde{\mu}$ induces a surjective homomorphism $\mu: K(\Sigma_g) \to L(\Sigma_g)$. \end{proposition}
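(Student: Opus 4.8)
The plan is to prove the two assertions separately: first that $\tilde{\mu}$ descends to a well-defined homomorphism on $K(\Sigma_g)$, and then that this homomorphism is surjective. Since $L(Y)$ and $n(Y)$ are homological invariants of the oriented bordered manifold (with its boundary parametrization), $\tilde{\mu}$ is already well-defined on diffeomorphism classes, and it extends linearly to a homomorphism from the free abelian group on bordered $3$-manifolds to $L(\Sigma_g)$. By the definition of $K(\Sigma_g)$, the only thing to check for the first assertion is that this linear extension annihilates every defining relation $Y_{\alpha} + Y_{\beta} + Y_{\gamma}$ coming from a surgery triangle.

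For this I would apply Proposition \ref{triangleequalities} directly, treating its two cases. In the first case the Lagrangians $L_{\alpha}, L_{\beta}, L_{\gamma}$ form an isotropic triangle and $n_{\alpha} = n_{\beta} = n_{\gamma} =: n$, so I can factor out the common coefficient and use the defining relation of $L(\Sigma_g)$:
\[ \tilde{\mu}(Y_{\alpha}) + \tilde{\mu}(Y_{\beta}) + \tilde{\mu}(Y_{\gamma}) = (-2)^{n}\bigl([L_{\alpha}] + [L_{\beta}] + [L_{\gamma}]\bigr) = 0. \]
In the second case the Lagrangians coincide, $L_{\alpha} = L_{\beta} = L_{\gamma} =: L$, and the proposition supplies the numerical identity $(-2)^{n_{\alpha}} + (-2)^{n_{\beta}} + (-2)^{n_{\gamma}} = 0$, whence
\[ \tilde{\mu}(Y_{\alpha}) + \tilde{\mu}(Y_{\beta}) + \tilde{\mu}(Y_{\gamma}) = \bigl((-2)^{n_{\alpha}} + (-2)^{n_{\beta}} + (-2)^{n_{\gamma}}\bigr)[L] = 0. \]
Thus every relation is killed and $\tilde{\mu}$ factors through a homomorphism $\mu: K(\Sigma_g) \to L(\Sigma_g)$.

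It remains to prove surjectivity, and since $L(\Sigma_g)$ is spanned by the classes $[L]$ it suffices to exhibit each $[L]$ in the image. The cleanest witnesses are handlebodies: a genus $g$ handlebody $Y$ is homotopy equivalent to a wedge of circles, so $H_2(Y) = 0$ and $n(Y) = 0$, giving $\mu(Y) = (-2)^{0}[L(Y)] = [L(Y)]$. Hence I only need every Lagrangian to arise as $L(Y)$ for some handlebody. Starting from the standard handlebody, whose meridian disks cut out a fixed Lagrangian $L_0$, I can precompose the boundary parametrization with a self-diffeomorphism $\psi$ of $\Sigma_g$; the resulting kernel is $\psi_*(L_0)$. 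Because the mapping class group surjects onto $Sp(2g,\Z)$ and hence onto $Sp(2g,\F_2)$, and the latter acts transitively on the Lagrangian subspaces of $H_1(\Sigma_g)$, every Lagrangian is realized this way, so every $[L]$ lies in the image of $\mu$.

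The essential content is entirely packaged in Proposition \ref{triangleequalities}, so the homomorphism claim is little more than bookkeeping over its two cases. The one place that genuinely requires outside input is surjectivity, where I rely on transitivity of the symplectic group on Lagrangians to realize every Lagrangian by a handlebody; I expect this realization step to be the only real obstacle, and it is a mild and standard one.
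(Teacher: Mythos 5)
Your proof is correct and follows essentially the same route as the paper: well-definedness is exactly the two-case check against Proposition \ref{triangleequalities} (which the paper leaves implicit), and surjectivity is the paper's argument via surjectivity of $\mathrm{Mod}_g \to Sp_{2g}(\F_2)$ and transitivity on Lagrangians, with the handlebody witnesses (where $n(Y)=0$, so the coefficient is $1$) spelled out explicitly.
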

\begin{proof} That $\tilde{\mu}$ induces a homomorphism follows directly from Proposition \ref{triangleequalities}.  That it is surjective follows from the fact that the homomorphism $\mathrm{Mod}_g \to Sp_{2g}(\F_2)$ is surjective, and the fact that $Sp_{2g}(\F_2)$ acts transitively on Lagrangian subspaces.
\end{proof}

In fact, $\mu$ is an isomorphism.  Before proving this we make a simple observation:

\begin{proposition} \label{keylemma} If $Y \in X(\Sigma_g)$ and $\alpha$ is any simple closed curve on $\Sigma_g$ whose homology class lies in $L(Y)$, then there exists $Y' \in X(\Sigma_g)$, equivalent to $Y$ in $K(\Sigma_g)$, such that $\alpha$ bounds an embedded disk in $Y'$. \end{proposition}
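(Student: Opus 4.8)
The plan is to reduce the problem to a statement about surfaces and then induct. Since $[\alpha] \in L(Y) = \ker(H_1(\Sigma_g) \to H_1(Y))$, the curve $\alpha$ is null-homologous in $Y$ and therefore bounds a compact embedded surface $F \subset Y$ with $F \cap \partial Y = \alpha$. I would measure the complexity of the pair $(Y,\alpha)$ by the genus $h$ of $F$ and argue by induction on $h$. If $h = 0$ then $F$ is a disk and we may take $Y' = Y$, so all the content lies in the inductive step that lowers $h$ while staying in the same class of $K(\Sigma_g)$.

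The basic geometric move is the following. Push $\alpha$ slightly into the interior of $Y$ to obtain a knot $K$ cobounding an annulus $A$ with $\alpha$, and do surgery on $K$ along the longitude determined by $A$; capping $A$ with a meridian disk of the surgery solid torus then exhibits $\alpha$ as the boundary of an embedded disk. More generally, a nonseparating curve $\gamma \subset F$ can be surgered along its $F$-framing so as to compress $F$ and lower its genus by one. The key homological observation is that the relevant surgery triangles fall into the second case of Proposition \ref{triangleequalities}: since $\alpha \in L(Y)$ and $L(Y)$ is isotropic, the class $\alpha$ pairs trivially with all of $L(Y)$, which forces the common isotropic subspace $S$ of that proposition to be all of $L(Y)$, of dimension $g$. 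Hence all three fillings share the Lagrangian $L(Y)$, the curve $\alpha$ stays null-homologous throughout the reduction, and the only invariant that moves is the rank $n$ of $H_2$.

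The step I expect to be the main obstacle is upgrading a surgery-triangle relation into an honest equality $[Y] = [Y']$ between single generators of $K(\Sigma_g)$: a single triangle only yields a three-term relation, and the naive compressing filling changes the rank of $H_2$, which multiplies $\tilde\mu$ by $-2$. The tool I would use is an edge-sharing principle: if two surgery triangles share two of their three manifolds, then their third manifolds are equal in $K(\Sigma_g)$, because $[Y]-[Y']$ is then a difference of two defining relations. The crux is therefore to realize the original $Y$ and the desired disk-compressed manifold $Y'$ as the two ``third vertices'' of a common edge $\{Z,W\}$ --- that is, to build two knots, one in the $Y$-world and one in the $Y'$-world, whose surgery triangles share the pair $Z,W$. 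I would try to construct these from $\gamma$ and a dual curve on $F$, using the connect-sum identity $[W \mathbin{\#} (S^2\times S^1)] = -2[W]$ (from the doubled triangle on an unknotted circle) to absorb the rank change, and taking care --- this is the delicate bookkeeping --- that no stray factor of $2$ survives, so that $[Y]=[Y']$ holds on the nose rather than merely after inverting $2$. Iterating the resulting genus reduction down to $h=0$ then yields a $Y'$ in which $\alpha$ bounds a disk.
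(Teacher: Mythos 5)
Your central algebraic tool --- that two surgery triangles sharing two of their three manifolds force their third manifolds to be equal in $K(\Sigma_g)$ --- is exactly the engine of the paper's proof, so that instinct is right. But your geometric implementation has two genuine gaps. First, $[\alpha]\in L(Y)$ only says $[\alpha]=0$ in $H_1(Y;\F_2)$, i.e.\ $[\alpha]\in 2H_1(Y;\Z)$; if $[\alpha]$ is nonzero integrally (e.g.\ the boundary curve of the twisted $I$-bundle over the Klein bottle that maps to twice a generator), then $\alpha$ bounds no orientable surface in $Y$ at all. Your induction on genus, the ``$F$-framing'' of a nonseparating curve, and the ``dual curve on $F$'' all presuppose an orientable $F$, so the scheme never gets started in such cases. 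The paper goes the opposite way: it deliberately arranges for the bounding surface $A$ to be \emph{nonorientable}, views it as a disk with cross-caps, and blows down the exceptional curves.

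Second, and more decisively, the move you propose cannot be promoted to an equality by edge-sharing. Fix meridian--longitude coordinates on the boundary of a neighborhood of your knot $K$, so $Y=Y_{(1,0)}(K)$ and your compressed manifold is the longitude filling $Y_{(0,1)}(K)$. Slopes at distance one from $(1,0)$ are the integer framings $(n,1)$; an edge consists of two consecutive ones $\{(n,1),(n+1,1)\}$; and a slope $(p,q)$ completing such an edge to a triangle satisfies $|p-nq|=|p-(n+1)q|=1$, which forces $q=0$ (giving back $Y$) or $(p,q)=(2n+1,2)$. So the third vertices reachable from $Y$ through a shared edge are exactly the \emph{half-integer} surgeries on $K$ --- never the longitude filling. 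This matches the $\mu$-obstruction you yourself noticed: $\mu(Y_{(0,1)}(K))=-2\mu(Y)$, and since $\mu$ descends to $K(\Sigma_g)$ using only Proposition \ref{triangleequalities} (no circularity) and $L(\Sigma_g)$ is free abelian with $[L(Y)]\neq 0$, your compressed manifold is provably \emph{not} equal to $Y$ in $K(\Sigma_g)$. Nor can the identity $[W\#(S^2\times S^1)]=-2[W]$ rescue this: it only yields relations divisible by $2$, and dividing by $2$ in $K(\Sigma_g)$ would require torsion-freeness of $K(\Sigma_g)$, which is a consequence of the theorem being proved, not an input. Half-integer framings are precisely the framings induced by M\"obius band neighborhoods, i.e.\ cross-cap blow-downs; this is why the paper's nonorientable setup is not a convenience but essentially the only thing the edge-sharing principle can act on. There, with coordinates in which the original filling of each exceptional curve is $(1,2)$ and the blow-down is $(1,0)$, both slopes complete the edge $\{(-1,-1),(0,1)\}$ to a triangle, giving $[Y]=[Y']$ on the nose with no stray factors. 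Your proposal flags exactly this construction as the crux and leaves it to be found; in the form you propose (compressing an orientable surface along its surface framing) it cannot be found.
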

\begin{proof} Let $A \subset Y$ be a surface with boundary $\alpha$.  Without loss of generality, $A$ is nonorientable, hence it is homeomorphic to a connected sum of real projective planes.  Let $e_1,\dots,e_k$ denote the exceptional curves in these projective planes.  Removing a tubular neighborhood of each of these curves from $Y$, we obtain a manifold with $k$ torus boundary components.  With respect to the framings induced by $A$, the manifold $Y$ is obtained by filling curves of slope $(1,2)$ on each boundary torus.  Let $Y'$ be the result of filling the curves of slope $(1,0)$ instead.  Then the surgery triangles
\[Y_{(1,2)}(K) + Y_{(-1,-1)}(K) + Y_{(0,1)}(K) = 0  \]
\[Y_{(1,0)}(K) + Y_{(-1,-1)}(K) + Y_{(0,1)}(K) = 0, \]
which are valid for any framed knot $K$ in any $3$-manifold $Y$, together show that $Y' = Y$ in $K(\Sigma_g)$.  The effect of these surgeries on the surface $A$ is to blow down all exceptional curves, the result being an embedded disk $A' \subset Y'$ with boundary $\alpha$. 
\end{proof}

\begin{theorem} The map $\mu: K(\Sigma_g) \to L(\Sigma_g)$ is an isomorphism. \end{theorem}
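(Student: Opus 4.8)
The plan is to exhibit an explicit two-sided inverse to $\mu$. Since $\mu$ has already been shown to be surjective, it suffices to construct a homomorphism $\nu: L(\Sigma_g) \to K(\Sigma_g)$ with $\mu\circ\nu = \id$ and $\nu\circ\mu = \id$. I would define $\nu$ on generators by $\nu([L]) = [H_L]$, where $H_L$ is any bordered handlebody whose kernel Lagrangian $L(H_L)$ equals $L$. Such a handlebody exists because the mapping class group surjects onto $Sp_{2g}(\F_2)$, which acts transitively on Lagrangians, so every Lagrangian is realized by suitably re-parametrizing the boundary of the standard handlebody. Because a handlebody $H$ has $H_2(H) = 0$, we have $n(H) = 0$ and hence $\tilde{\mu}(H_L) = (-2)^0[L(H_L)] = [L]$; thus, once $\nu$ is shown to be well defined, the identity $\mu\circ\nu = \id_{L(\Sigma_g)}$ is automatic.

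The real content is the well-definedness of $\nu$, which I would split into two claims. First, the \emph{main lemma}: any two handlebodies $H, H'$ with $L(H) = L(H')$ satisfy $[H] = [H']$ in $K(\Sigma_g)$, which makes $\nu([L])$ independent of the chosen representative. Second: for every isotropic triangle $(L_{\alpha}, L_{\beta}, L_{\gamma})$ one can choose handlebody representatives forming an honest surgery triangle, so that $[H_{L_{\alpha}}] + [H_{L_{\beta}}] + [H_{L_{\gamma}}] = 0$ and the defining relations of $L(\Sigma_g)$ are respected. For the second claim I would fix the isotropic $(g-1)$-plane $S = L_{\alpha} \cap L_{\beta} \cap L_{\gamma}$, build a handlebody whose meridian system spans $S$ together with one extra curve, and then vary the last filling curve over the three slopes of the torus $S^{\perp}/S$; the three resulting handlebodies realize $L_{\alpha}, L_{\beta}, L_{\gamma}$ and form a surgery triangle by construction.

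Granting well-definedness, the argument concludes quickly. By Theorem \ref{minimalgen} the set $M_g$ generates $X(\Sigma_g)$, and every element of $M_g$ is a handlebody, so the handlebody classes span $K(\Sigma_g)$. On such a class we compute $\nu\circ\mu([H]) = \nu([L(H)]) = [H_{L(H)}] = [H]$, where the last equality is the main lemma, so $\nu\circ\mu = \id_{K(\Sigma_g)}$. Combined with $\mu\circ\nu = \id$, this shows that $\mu$ is an isomorphism.

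I expect the main lemma to be the crucial step, and this is precisely where Proposition \ref{keylemma} enters. The difficulty is that two handlebodies with the same Lagrangian can have meridian systems that agree in homology but differ in isotopy class; equivalently, after invoking transitivity of the $Sp_{2g}(\F_2)$-action one must show that $\phi \cdot H_0 \equiv H_0$ in $K(\Sigma_g)$ for every $\phi$ in the (lift of the) stabilizer of $L_0$. Proposition \ref{keylemma} supplies exactly the needed flexibility: whenever a simple closed curve already lies in $L(Y)$, one may modify $Y$ within its $K$-class until that curve bounds an embedded disk. I would use this to replace the meridian disks of $\phi \cdot H_0$ one at a time by those of $H_0$, tracking the class in $K(\Sigma_g)$ at each stage. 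The main subtlety is that the intermediate manifolds produced by Proposition \ref{keylemma} need not themselves be handlebodies, so the bookkeeping has to be carried out in $K(\Sigma_g)$ rather than among handlebodies.
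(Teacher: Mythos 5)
Your overall architecture matches the paper's: construct an inverse $\nu: L(\Sigma_g) \to K(\Sigma_g)$ by sending a Lagrangian to a handlebody realizing it, establish well-definedness via Proposition \ref{keylemma} and via the realization of isotropic triangles by honest surgery triangles of handlebodies (your explicit construction over the three slopes of $S^{\perp}/S$ is a fine substitute for the paper's appeal to transitivity of $\mathrm{Mod}_g$), and deduce injectivity from $\nu \circ \mu = \id$. Your one real departure --- verifying $\nu \circ \mu = \id$ only on handlebody classes, which span $K(\Sigma_g)$ by Theorem \ref{minimalgen} (or already by Baldwin--Bloom) --- is legitimate and not circular, since Section 3 makes no use of the present theorem; it even sidesteps the question of what $\mu^{-1}(\mu(Y))$ equals when $H_2(Y) \neq 0$, a point the paper treats only with the word ``evidently.''

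There is, however, a genuine gap precisely at the step you yourself flagged as crucial, the main lemma. Applying Proposition \ref{keylemma} successively to meridian curves $\alpha_1, \dots, \alpha_g$ of $H_0$ inside $\phi \cdot H_0$ does \emph{not} terminate at $H_0$: it produces a manifold $Y_g$, equivalent to $\phi \cdot H_0$ in $K(\Sigma_g)$, in which each $\alpha_i$ bounds an embedded disk. Cutting along those disks shows only that $Y_g \cong H_0 \# Z$ for some closed $3$-manifold $Z$ with $H_2(Z;\F_2) = 0$ (the blow-downs in Proposition \ref{keylemma} preserve $H_2$); nothing forces $Z = S^3$. Your ``bookkeeping in $K(\Sigma_g)$'' cannot absorb this on its own: one still needs the relation $[H_0 \# Z] = [H_0]$ in $K(\Sigma_g)$, which is a separate statement requiring its own proof. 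The paper supplies it through a genus-zero base case that your proposal never mentions: by Proposition \ref{gen}, $K(S^2)$ is cyclic and generated by $[S^3]$, and since any generator maps to a generator of $L(S^2) = \Z$, the map $\mu$ is an isomorphism in genus zero; as $\mu(Z) = (-2)^0 = \mu(S^3)$, it follows that $[Z] = [S^3]$ in $K(S^2)$, and connected sum with $H_0$ (which carries surgery triangles to surgery triangles, hence descends to $K$-groups) yields $[H_0 \# Z] = [H_0]$. Without this closed-summand argument your main lemma is unproven; with it, your proof becomes essentially the paper's.
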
\begin{proof} First we treat the case $g=0$.  Since every $3$-manifold can be reduced to $S^3$ by surgery triangles, we know that $K(S^2)$ is a cyclic group.  Any generator of this cyclic group maps to a generator of $L(S^2) = \Z$, hence $K(S^2) = \Z$ and $\mu: K(S^2) \to L(S^2)$ is an isomorphism.

In general, any bordered manifold $Y_0$ with $L(Y_0) = L$ and $H_2(Y_0) = 0$ is equivalent in $K(\Sigma_g)$ to a ``standard'' example, namely the handlebody obtained by representing a standard basis of $L$ by disjoint simple closed curves on $\Sigma_g$ and attaching disks along these curves.  To see why, suppose that $\alpha_1,\dots,\alpha_g$ are curves representing a basis for $L$.  By Proposition \ref{keylemma}, $Y_0$ is equivalent in $K(\Sigma_g)$ to another bordered manifold $Y_1$ in which $\alpha_1$ bounds a disk.  Examining the proof of Proposition $\ref{keylemma}$, we see that $H_2(Y_0)$ and $H_2(Y_1)$ are isomorphic, so $H_2(Y_1) = 0$. 

Continuing this process inductively produces a manifold $Y_g$, equivalent to $Y_0$ in $K(\Sigma_g)$, such that all the curves $\alpha_i$ bound disks in $Y_g$.  This manifold $Y_g$ is the connected sum of a standard handlebody $H_g$ and a closed $3$-manifold $Z$ such that $H_2(Z) = 0$.  Because $\mu:K(S^2) \to L(S^2)$ is an isomorphism and $H_2(Z) = 0$, a connected sum with $Z$ is equivalent to a connected sum with $S^3$, hence $Y_g$ is equivalent to $H_g$ in $K(\Sigma_g)$.  Thus the original bordered manifold $Y$ is equivalent to $H_g$, as claimed.

Any isotropic triangle can also be realized by a standard example, due to the transitive action of $\mathrm{Mod}_g$ on isotropic triangles.  Hence there is an inverse map
\[ \mu^{-1}: L(\Sigma_g) \to K(\Sigma_g) \]
defined by sending a Lagrangian $L$ to any $Y$ with $L(Y) = L$ and $H_2(Y) = 0$.  Evidently we have $\mu^{-1}(\mu(Y)) = Y$ in $K(\Sigma_g)$, from which we conclude that $\mu$ is injective.
\end{proof}

Any generating set for $X(\Sigma_g)$ maps to a spanning set for $K(\Sigma_g)$, or equivalently for $L(\Sigma_g)$.  The group $L(\Sigma_g)$ has been computed by Blokhuis and Brouwer \cite{BlBr}, who found that it is a free abelian group of rank 
\[n(g) = \frac{(2^g+1)(2^{g-1}+1)}{3}.\] 
Since any spanning set for $X(\Sigma_g)$ maps to a spanning set for $K(\Sigma_g)$, we obtain: 

\begin{corollary} Any generating set for $X(\Sigma_g)$ has cardinality at least $n(g)$.  \end{corollary}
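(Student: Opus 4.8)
The plan is to transport the problem out of the combinatorial geometry $X(\Sigma_g)$ and into linear algebra, using the two maps already built in this section, and then to invoke the external rank computation of Blokhuis and Brouwer. The whole argument is a short chain of reductions, and essentially all of the real work has already been done; what remains is a routine observation about free abelian groups.

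First I would fix an arbitrary generating set $G$ for $X(\Sigma_g)$ and apply the tautological map $X(\Sigma_g) \to K(\Sigma_g)$. The key point is that this map carries $G$ to a \emph{spanning set} $\bar{G}$ of the abelian group $K(\Sigma_g)$. To see this, let $H$ be the subgroup of $K(\Sigma_g)$ generated by $\bar{G}$, and consider its preimage in $X(\Sigma_g)$. Because the defining relation $Y_\alpha + Y_\beta + Y_\gamma = 0$ lets one recover any single vertex of a surgery triangle from the other two, this preimage satisfies the subspace condition: any line meeting it in two (not necessarily distinct) points is contained in it. Thus the preimage is a subspace containing $G$, so it must be all of $X(\Sigma_g)$; since the classes of bordered manifolds generate $K(\Sigma_g)$ by definition, this forces $H = K(\Sigma_g)$, i.e.\ $\bar{G}$ spans.

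Next I would compose with the isomorphism $\mu: K(\Sigma_g) \to L(\Sigma_g)$ established above. Isomorphisms preserve spanning sets, so $\mu(\bar{G})$ spans $L(\Sigma_g)$, which by the computation of Blokhuis and Brouwer is free abelian of rank $n(g)$. Finally, a spanning set of a free abelian group of rank $n(g)$ must have at least $n(g)$ elements: tensoring with $\Q$ turns $L(\Sigma_g)$ into the $\Q$-vector space $\Q^{n(g)}$ and sends a spanning set to a spanning set, so a dimension count gives $|\mu(\bar{G})| \geq n(g)$. Since passing to images can only decrease cardinality, $|G| \geq |\bar{G}| \geq |\mu(\bar{G})| \geq n(g)$, which is exactly the assertion.

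The honest answer to where the difficulty lies is that it has already been dispatched elsewhere: the substance of this corollary is the isomorphism $\mu$ (whose proof occupies the bulk of this section) together with the Blokhuis--Brouwer rank formula, both of which I am entitled to assume. The only genuinely new step is the last one, and there the single subtlety worth flagging is that one should work over a field --- tensoring with $\Q$, or reducing modulo a prime --- rather than arguing naively with group generators, since a priori a minimal generating set of an abelian group need not have size equal to its rank. Reducing to a vector space removes this worry and makes the dimension bound immediate.
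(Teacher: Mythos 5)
Your proof is correct and is essentially the paper's own argument: pass a generating set through the tautological map $X(\Sigma_g) \to K(\Sigma_g)$ to get a spanning set, transport it through the isomorphism $\mu$ to $L(\Sigma_g)$, and invoke the Blokhuis--Brouwer computation that $L(\Sigma_g)$ is free abelian of rank $n(g)$. The only difference is that you spell out the two routine steps the paper leaves implicit (the subspace argument showing the image spans $K(\Sigma_g)$, and the tensor-with-$\Q$ dimension count), both of which are done correctly.
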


\end{document}